\newtheorem{thm}{Theorem}[section]
\newtheorem{lem}[thm]{Lemma}
\theoremstyle{definition}
\theoremstyle{remark}
\numberwithin{equation}{section}
\newcommand{\Rmnum}[1]{\expandafter\@slowromancap\romannumeral #1@}
\def\gr{\operatorname{gr}}
\def \C{\mathcal{C}}
\def\D{\Delta}
\def\v{\varepsilon}
\begin{document}
\title[Hopf Structures on Minimal Hopf Quivers]{Hopf Structures on Minimal Hopf Quivers}
\author{Hua-Lin Huang}
\address{School of Mathematics, Shandong University, Jinan 250100, China} \email{hualin@sdu.edu.cn}
\author{Yu Ye}
\address{Department of Mathematics, University of Science and Technology of China,
 Hefei 230026, China}
\email{yeyu@ustc.edu.cn}
\author{Qing Zhao}
\address{Department of Mathematics, University of Science and Technology of China,
 Hefei 230026, China}
\email{qzhao@mail.ustc.edu.cn}
\maketitle
\date{}

\begin{abstract}
In this paper we investigate pointed Hopf algebras via quiver
methods. We classify all possible Hopf structures arising from
minimal Hopf quivers, namely basic cycles and the linear chain. This
provides full local structure information for general pointed Hopf
algebras.

\vskip 10pt

\noindent{\bf Keywords} \ \ Hopf algebra, Hopf
quiver  \\
\noindent{\bf 2000 MR Subject Classification} \ \ 16W30, 16W35,
16S80
\end{abstract}

\section{Introduction}

Quivers are oriented diagrams consisting of vertices and arrows. Due
to a well-known theorem of Gabriel \cite{gab}, elementary
associative algebras over fields can be presented by path algebras
of quivers modulo admissible ideals in some unique manner and their
representations given by representations of the corresponding bound
quivers. This makes the abstract algebras and their representation
theory visible and plays a central role in the modern representation
theory of associative algebras.

After Gabriel, quiver theory has been established for some other
algebraic structures, in particular for coalgebras and Hopf
algebras. A coalgebra over a field is said to be pointed if its
simple subcoalgebras are one-dimensional, or equivalently, its
simple comodules are one-dimensional. In \cite{cm}, Chin and
Montgomery gave a Gabriel type theorem for pointed coalgebras.
Cibils and Rosso introduced the notion of Hopf quivers \cite{cr2},
which are determined by groups with ramification data, and observed
that the path coalgebra of a quiver admits a graded Hopf structure
if and only if the quiver is a Hopf quiver. A Hopf algebra is called
pointed if its underlying coalgebra is so. Van Oystaeyen and Zhang
established a Gabriel type theorem for graded pointed Hopf algebras
\cite{voz}.

The aforementioned results motivate our project on the study of
general pointed Hopf algebras by taking advantage of quiver methods.
The quiver setting gives a visible frame to classification problem
and representation theory and other respects. The project of quiver
approaches to pointed Hopf algebras consists mainly of the following
aspects:
\begin{enumerate}
\item Classify graded Hopf structures on Hopf quivers. This amounts to a complete classification of graded pointed
      Hopf algebras.
\item Carry out a proper deformation process for graded Hopf structures to get general pointed Hopf algebras.
\item Investigate the (co)representation theory as well as other aspects of the obtained Hopf algebras
      with a help of their quiver presentation.
\end{enumerate}

This paper is conceived as the first step of the project. We
classify Hopf algebra structures on minimal Hopf quivers, namely
basic cycles and the linear train. Intuitively, it is easy to
observe that a general Hopf quiver are compatible combination of
these minimal Hopf quivers. Moreover, on a given Hopf quiver there
is a clear relation between the sub-Hopf algebras and sub-Hopf
quivers, as can be seen from Cibils and Rosso's description of
graded Hopf structures on Hopf quivers \cite{cr2}. Therefore, our
result provides complete basic structure ingredients for general
pointed Hopf algebras.

Most of the Hopf structures arising from minimal Hopf quivers are by
no means novel. They appeared sporadically in various references of
Hopf algebras (see \cite{taft,as1,rad} etc.) and quantum groups (see
\cite{lusztig,dck} etc). Quiver methods provide these Hopf algebras
in a unified setting. Aside from quiver techniques, our arguments
also rely on Bergman's Diamond Lemma \cite{b} which helps to present
the Hopf algebras by generators with relations. This is useful in
carrying out the preferred deformation procedure in the sense of
Gerstenhaber and Schack \cite{g-s}.

In principle general pointed Hopf algebras can be obtained by
compatible gluing of the minimal ones we obtain. By consulting the
classical Lie theory, viewing the minimal Hopf structures as quantum
version of the Borel part of $sl_2,$ it is natural to expect that
the gluing problem should turn out to be a rich theory and (quantum
versions of) Cartan matrices, Weyl groups and root systems should
get involved. We leave this for future work. In the situation of
finite-dimensional pointed Hopf algebras with abelian group-likes,
Andruskiewitsch and Schneider have made substantial progress in
classification problem \cite{as2} by different method initiated in
\cite{as1}.

The paper is organized as follows. In Section 2 we review some
necessary facts about Hopf quivers and pointed Hopf algebras. In
Sections 3 and 4, the explicit classifications of Hopf structures on
minimal Hopf quivers are given. Section 5 is devoted to some
applications of the classification results.

Throughout the paper, we work over an algebraically closed field of
characteristic zero. The readers are referred to \cite{grk,ass} for
general knowledge of quivers and representations, and to \cite{sw,
mont1} for that of coalgebras and Hopf algebras.

\section{Quiver Approaches to Pointed Hopf Algebras}

For the convenience of the reader, in this section we recall some
basic notions and facts in \cite{cr2, voz}. Note that there is a
dual approach to elementary Hopf algebras via quivers, see
\cite{c1,g,cr1,gs} for related works.

\subsection{}
A quiver is a quadruple $Q=(Q_0,Q_1,s,t),$ where $Q_0$ is the set of
vertices, $Q_1$ is the set of arrows, and $s,t:\ Q_1 \longrightarrow
Q_0$ are two maps assigning respectively the source and the target
for each arrow. A path of length $l \ge 1$ in the quiver $Q$ is a
finitely ordered sequence of $l$ arrows $a_l \cdots a_1$ such that
$s(a_{i+1})=t(a_i)$ for $1 \le i \le l-1.$ By convention a vertex is
said to be a trivial path of length $0.$

The path coalgebra $kQ$ is the $k$-space spanned by the paths of $Q$
with counit and comultiplication maps defined by $\v(g)=1, \ \D(g)=g
\otimes g$ for each $g \in Q_0,$ and for each nontrivial path $p=a_n
\cdots a_1, \ \v(p)=0,$
\begin{equation}
\D(a_n \cdots a_1)=p \otimes s(a_1) + \sum_{i=1}^{n-1}a_n \cdots
a_{i+1} \otimes a_i \cdots a_1 \nonumber + t(a_n) \otimes p \ .
\end{equation}
The length of paths gives a natural gradation to the path coalgebra.
Let $Q_n$ denote the set of paths of length $n$ in $Q,$ then
$kQ=\oplus_{n \ge 0} kQ_n$ and $\D(kQ_n) \subseteq
\oplus_{n=i+j}kQ_i \otimes kQ_j.$ Clearly $kQ$ is pointed with the
set of group-likes $G(kQ)=Q_0,$ and has the following coradical
filtration $$ kQ_0 \subseteq kQ_0 \oplus kQ_1 \subseteq kQ_0 \oplus
kQ_1 \oplus kQ_2 \subseteq \cdots.$$ Hence $kQ$ is coradically
graded.

According to \cite{cr2}, a quiver $Q$ is said to be a Hopf quiver if
the corresponding path coalgebra $kQ$ admits a graded Hopf algebra
structure. Hopf quivers can be determined by ramification data of
groups. Let $G$ be a group, $\C$ the set of conjugacy classes. A
ramification datum $R$ of the group $G$ is a formal sum $\sum_{C \in
\C}R_CC$ of conjugacy classes with coefficients in
$\mathbb{N}=\{0,1,2,\cdots\}.$ The corresponding Hopf quiver
$Q=Q(G,R)$ is defined as follows: the set of vertices $Q_0$ is $G,$
and for each $x \in G$ and $c \in C,$ there are $R_C$ arrows going
from $x$ to $cx.$

A Hopf quiver $Q=Q(G,R)$ is connected if and only if the union of
the conjugacy classes with non-zero coefficients in $R$ generates
$G.$ We denote the unit element of $G$ by $e.$ If $R_{ \{e\} } \ne
0,$ then there are $R_{ \{e\} }$-loops attached to each vertex; if
the order of elements in a conjugacy class $C \ne \{e\}$ is $n$ and
$R_C \ne 0,$ then corresponding to these data in $Q$ there is a
sub-quiver $(n,R_C)$-cycle (called basic $n$-cycle if $R_C=1$),
i.e., the quiver having $n$ vertices, indexed by the set of integers
modulo $n,$ and $R_C$ arrows going from $\overline{i}$ to
$\overline{i+1}$ for each $i;$ if the order of elements in a
conjugacy class $C$ is $\infty,$ then in $Q$ there is a sub-quiver
$R_C$-chain (called linear chain if $R_C=1$), i.e., a quiver having
set of vertices indexed by the set of integral numbers, and $R_C$
arrows going from $j$ to $j+1$ for each $j.$ Therefore, basic cycles
(including loop as 1-cycle) and the linear train are basic building
brocks of general Hopf quivers.

Due to cibils and Rosso \cite{cr2}, for a given Hopf quiver $Q,$ the
set of graded Hopf structures on $kQ$ is in one-to-one
correspondence with the set of $kQ_0$-Hopf bimodule structures on
$kQ_1.$ The graded Hopf structures are obtained from Hopf bimodules
via quantum shuffle product \cite{rosso}. The graded Hopf structures
can be restricted to sub-Hopf quivers, hence for the very local
sub-Hopf structures it suffices to consider those arising from
minimal Hopf quivers.

\subsection{}

Let $H$ be a pointed Hopf algebra. Denote its coradical filtration
by $\{ H_n \}_{n=0}^{\infty}.$ Define \[\operatorname{gr}(H)=H_0
\oplus H_1/H_0 \oplus H_2/H_1 \oplus \cdots \cdots \] as the
corresponding (coradically) graded coalgebra. Then
$\operatorname{gr}(H)$ inherits from $H$ a coradically graded Hopf
algebra structure (see e.g. \cite{mont1}). Note that any generating
set of $\operatorname{gr}(H)$ (as an algebra) can be lifted to one
for $H.$ This useful fact can be verified easily by induction.

\begin{lem}
Assume that $\mathcal{G} \subset \operatorname{gr}(H)$ is a
generating set and $\widetilde{\mathcal{G}} \subset H$ an arbitrary
set of its representatives. Then $\widetilde{\mathcal{G}}$ generates
$H.$
\end{lem}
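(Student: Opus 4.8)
The plan is to prove this by induction on the coradical filtration, showing that each layer $H_n$ is contained in the subalgebra generated by $\widetilde{\mathcal{G}}$. Write $B$ for the subalgebra of $H$ generated by $\widetilde{\mathcal{G}}$; we want $B = H$. Since $H = \bigcup_{n \ge 0} H_n$, it suffices to show $H_n \subseteq B$ for all $n$, which I would do by induction on $n$.

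For the base case, note that $\operatorname{gr}(H)$ contains $H_0$ as a subalgebra (indeed $\operatorname{gr}(H)_0 = H_0 = kG(H)$), and the group-likes are among the elements one must be able to produce from any algebra generating set of $\operatorname{gr}(H)$; more carefully, since $H_0 \subseteq \operatorname{gr}(H)$ is generated by $\mathcal{G}$ together with the grading-zero parts, and every representative of a degree-zero element of $\operatorname{gr}(H)$ already lies in $H_0$, one gets $H_0 \subseteq B$. (If one prefers, one can simply include $H_0$ in the ambient ground data, as is standard for pointed Hopf algebras; the point is that $H_0$ is canonically a subalgebra of both $H$ and $\operatorname{gr}(H)$.)

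For the inductive step, assume $H_{n-1} \subseteq B$ and take $h \in H_n$. Its image $\bar h$ in $H_n/H_{n-1} = \operatorname{gr}(H)_n$ can be written, since $\mathcal{G}$ generates $\operatorname{gr}(H)$ as an algebra, as a polynomial in elements of $\mathcal{G}$; collecting the degree-$n$ contributions, $\bar h$ is a sum of products $\bar g_1 \cdots \bar g_r$ with $g_i \in \widetilde{\mathcal{G}}$ and the total degree equal to $n$. Form the corresponding element $h' = \sum g_1 \cdots g_r \in B$ in $H$ itself. By construction $h' \in H_n$ (products of filtered elements land in the expected filtration layer) and $h$ and $h'$ have the same image in $\operatorname{gr}(H)_n$, so $h - h' \in H_{n-1}$. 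By the inductive hypothesis $h - h' \in B$, hence $h = (h - h') + h' \in B$. This completes the induction, and therefore $B = H$.

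The only delicate point is the bookkeeping in the inductive step: one must check that when an element of $\operatorname{gr}(H)$ is expressed as a polynomial in $\mathcal{G}$, the homogeneous degree-$n$ component can be realized by a genuine element of $H_n$ built from the chosen representatives, using that the coradical filtration is an algebra filtration ($H_i \cdot H_j \subseteq H_{i+j}$) and that passing to $\operatorname{gr}$ is multiplicative. This is entirely routine, so there is no real obstacle; the statement is essentially the standard fact that a filtered algebra is generated by any lift of a generating set of its associated graded, specialized to the coradical filtration of a pointed Hopf algebra.
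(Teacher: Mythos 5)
Your argument is correct and is precisely the induction on the coradical filtration that the paper itself invokes (the paper merely remarks that the lemma ``can be verified easily by induction'' without writing out the details). Your write-up supplies exactly the intended bookkeeping, using that the coradical filtration of a pointed Hopf algebra is an algebra filtration ($H_iH_j\subseteq H_{i+j}$) and that passing to $\operatorname{gr}$ is multiplicative, so there is nothing to add.
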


The procedure from $H$ to $\gr H$ is called degeneration. The
converse procedure is called deformation. According to Gerstenhaber
and Schack \cite{g-s}, a coalgebra-preserving deformation is called
preferred. If we want to classify all the Hopf structures on the
whole path coalgebra of a Hopf quiver, or the bialgebras of type one
\cite{n}, then we only need to carry out preferred deformation
procedure.

According to Van Oystaeyen and Zhang \cite{voz}, if $H$ is a
coradically graded pointed Hopf algebra, then there exists a unique
Hopf quiver $Q(H)$ such that $H$ can be realized as a large sub-Hopf
algebra of a graded Hopf structure on the path coalgebra $kQ(H).$
Here by ``large" we mean $H$ contains the subspace $kQ(H)_0 \oplus
kQ(H)_1.$ This Gabriel type theorem allows us to classify pointed
Hopf algebras exhaustively in the quiver setting. The combinatorial
structure of Hopf quivers implies clearly a Cartier-Gabriel
decomposition theorem (see e.g. \cite{sw,mont1}) for general pointed
Hopf algebras as given by Montgomery \cite{mont2}. It suffices to
study only Hopf structures on connected Hopf quivers.

\section{Hopf Structures on Basic Cycles}

\subsection{}
Let $G=<g \ | \ g^n=1>$ be a cyclic group of order $n$ and let
$\mathcal{Z}$ denote the Hopf quiver $Q(G,g).$ The quiver
$\mathcal{Z}$ is a basic $n$-cycle and this is the only possible way
it is realized as a Hopf quiver. If $n=1,$ then $\mathcal{Z}$ is the
one-loop quiver, that is, consisting of one vertex and one loop. It
is easy to see that such a quiver provides only the familiar divided
power Hopf algebra in one variable, which is isomorphic to the
polynomial algebra in one variable.

From now on we assume $n > 1$ and fix a basic $n$-cycle
$\mathcal{Z}.$ For each integer $i$ modulo $n,$ let $a_i$ denote the
arrow $g^i \longrightarrow g^{i+1}.$ Let $p_i^l$ denote the path
$a_{i+l-1} \cdots a_{i+1}a_i$ of length $l.$ Then $\{p_i^l \ | \ 0
\le i \le n-1, \ l \ge 0 \}$ is a basis of $k\mathcal{Z}.$

Before moving on, we fix some notations of Gaussian binomials. For
any $q \in k,$ integers $l,m \ge 0,$ let
\[
l_q=1+ q + \cdots +q^{l-1}, \ \ l!_q=1_q \cdots l_q, \ \
\binom{l+m}{l}_q=\frac{(l+m)!_q}{l!_qm!_q}.
\]
When $1\ne q\in k$ is an $n$-th root of unity of order $d,$
\[
{l+m \choose l}_q = 0 \ \ \text{if and only if} \ \
\big[\frac{l+m}{d} \big]-\big[ \frac{m}{d}\big]-\big[ \frac{l}{d}
\big]>0,
\]
where $[x]$ means the integer part of $x$.

We need the following fact about automorphisms of the path coalgebra
$k\mathcal{Z}$ in our later argument.

\begin{lem}
Let $d>1$ be an integer and $k\mathcal{Z}[d]$ the subcoalgebra
$\bigoplus_{i=0}^d k\mathcal{Z}_i.$ For any $\lambda \in k$ the
following linear map
\begin{alignat*}{2}
f^d_{\lambda}(0): \ & k\mathcal{Z}[d] \longrightarrow k\mathcal{Z}[d]\\
& p_i^l \mapsto p_i^l, \quad \forall \ i, \ 0 \le l \le d-1, \\
& p_0^d \mapsto p_0^d+\lambda(1-g^d),\\
& p_i^d \mapsto p_i^d, \quad \ 1 \le i \le n-1.
\end{alignat*}
defines a coalgebra automorphism of $k\mathcal{Z}[d].$ There exists
a coalgebra automorphism $F^d_\lambda(0): \ k\mathcal{Z}
\longrightarrow k\mathcal{Z}$ such that its restriction to
$k\mathcal{Z}[d]$ is $f_\lambda^d(0).$ Similar map
$f^d_{\lambda}(j)$ can be defined and be extended to
$F^d_\lambda(j)$ for any $j.$ Moreover, any automorphism of the
subcoalgebra $k\mathcal{Z}[d]$ with restriction to
$k\mathcal{Z}[d-1]$ being identity is a finite composition of some
$f_\lambda^d(j)$'s. Therefore all such automorphisms are extendable
to automorphisms of the path coalgebra $k\mathcal{Z}.$
\end{lem}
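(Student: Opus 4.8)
The plan is to verify the claims in the order they are stated, reducing everything to the basic building block $f^d_\lambda(0)$. First I would check that $f^d_\lambda(0)$ is a coalgebra morphism by direct computation on $\mathbf{Z}[d]$. The only nontrivial check is on $p_0^d$: using the comultiplication formula from Section 2, $\D(p_0^d) = p_0^d \otimes 1 + \sum_{i=1}^{d-1} p_i^{d-i} \otimes p_0^i + g^d \otimes p_0^d$, while $\D(1-g^d) = 1 \otimes 1 - g^d \otimes g^d = (1-g^d)\otimes 1 + g^d \otimes (1-g^d)$. Applying $(f^d_\lambda(0) \otimes f^d_\lambda(0))$ to $\D(p_0^d)$ and comparing with $\D(f^d_\lambda(0)(p_0^d))$, the cross terms $p_i^{d-i}\otimes p_0^i$ with $1\le i\le d-1$ are fixed (these paths have length $<d$), and the boundary terms match precisely because the $\lambda(1-g^d)$ correction is grouplike-primitive in the required sense. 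Counit preservation is immediate since $\v(1-g^d)=0$. Bijectivity is clear: $f^d_\lambda(0)$ is unipotent (upper-triangular with respect to the length filtration with identity on the diagonal), so it is invertible, with inverse $f^d_{-\lambda}(0)$. The maps $f^d_\lambda(j)$ are defined analogously with $p_0^d, g^d$ replaced by $p_j^d, g^{j+d}-g^j$ (up to the appropriate grouplike normalization), and the same computation applies.

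Next I would extend $f^d_\lambda(0)$ to a coalgebra automorphism $F^d_\lambda(0)$ of all of $k\mathcal{Z}$. The idea is to define $F^d_\lambda(0)$ to be the identity on all paths $p_i^l$ of length $l \neq d$ and on all grouplikes, and to set $F^d_\lambda(0)(p_i^l) = p_i^l$ for $l>d$ as well — but one must check this is still a coalgebra map, since $\D(p_i^l)$ for $l > d$ involves tensor factors of length exactly $d$, namely terms $p_{i+l-d}^{d} \otimes p_i^{l-d}$ and $p_i^{l-d}\otimes\text{(length-}d\text{)}$, on which $F$ is not the identity. The resolution is that the correction term $\lambda(1-g^d)$ lives in the subcoalgebra $k\mathcal{Z}_0$, so when one computes $(F\otimes F)\D(p_i^l) - \D(p_i^l)$ one gets only terms of the form (stuff)$\otimes \lambda(1-g^{d+\text{shift}})$ or $\lambda(1-g^{\text{shift}})\otimes$(stuff); these must be absorbed by \emph{modifying} $F$ on longer paths too, i.e. $F$ is not simply the identity in high degrees but picks up lower-order corrections. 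Concretely, I would build $F^d_\lambda(0)$ degree by degree using the fact that the obstruction at each stage lies in $k\mathcal{Z}_0 \otimes (\cdots) \oplus (\cdots)\otimes k\mathcal{Z}_0$ and can be cancelled; this is where the real work is. Alternatively — and this is probably cleaner — one notes that $f^d_\lambda(0)$ restricted to $k\mathcal{Z}[d-1]$ is the identity, and extends by a standard cofree-coalgebra / filtered colimit argument: the path coalgebra $k\mathcal{Z}$ is, in each degree, cogenerated by its lower part, so a compatible family of automorphisms of the $k\mathcal{Z}[m]$ glues.

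For the final and main assertion — that any automorphism $\phi$ of $k\mathcal{Z}[d]$ restricting to the identity on $k\mathcal{Z}[d-1]$ is a finite composition of $f^d_\lambda(j)$'s — I would argue as follows. Since $\phi$ is the identity on $k\mathcal{Z}[d-1]$ and is a coalgebra map, for each path $p_i^d$ of length $d$ the element $\phi(p_i^d) - p_i^d$ must be primitive-like modulo $k\mathcal{Z}[d-1]$; computing $\D(\phi(p_i^d)-p_i^d)$ and using that $\phi$ fixes everything of length $<d$, one finds $\D(\phi(p_i^d)-p_i^d) = (\phi(p_i^d)-p_i^d)\otimes 1 + g^{i+d}\otimes(\phi(p_i^d)-p_i^d) + (\text{terms forcing the difference into }k\mathcal{Z}_0\oplus k\mathcal{Z}_d)$. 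A short argument then pins down $\phi(p_i^d) - p_i^d$ to be a scalar multiple of $g^{i+d}-g^i$ plus a genuine length-$d$ correction; but the length-$d$ correction part is itself forced to vanish by comparing the grouplike components (a length-$d$ path is not grouplike, and the coalgebra-map condition leaves no room). Hence $\phi(p_i^d) = p_i^d + \lambda_i(g^{i+d}-g^i)$ for scalars $\lambda_i$, which is exactly $\big(\prod_{j} f^d_{\lambda_j}(j)\big)$ — the composition is finite (at most $n$ factors, one per residue $j$) and the factors commute since each affects a different $p_j^d$. Therefore $\phi = \prod_j f^d_{\lambda_j}(j)$, and by the extension result each factor, hence $\phi$, extends to an automorphism of $k\mathcal{Z}$. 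The main obstacle I anticipate is the bookkeeping in the extension step (the second paragraph): making sure the degree-by-degree construction of $F^d_\lambda(j)$ genuinely closes up into a coalgebra automorphism rather than merely a coalgebra endomorphism, and that the corrections do not propagate uncontrollably up the length filtration.
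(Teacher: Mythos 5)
Your treatment of the first claim (that $f^d_\lambda(0)$ is a coalgebra automorphism, with inverse $f^d_{-\lambda}(0)$) and of the last claim (that an automorphism of $k\mathcal{Z}[d]$ fixing $k\mathcal{Z}[d-1]$ must send each $p_j^d$ to $p_j^d+\lambda_j(g^{j+d}-g^j)$, because the difference is a $(g^{j+d},g^j)$-skew-primitive whose graded components in degrees $2,\dots,d$ are forced to vanish) is correct and in fact more detailed than the paper, which declares these parts obvious or easy. The genuine gap is the extension step, which is the real content of the lemma and is used repeatedly later in the paper. Your first strategy leaves the decisive claim --- that ``the obstruction at each stage \dots can be cancelled'' --- unproved; this is precisely what has to be shown, and your description of where the obstruction lives is not accurate once $F$ has already been modified in degrees between $d$ and $l$: the corrections $F(p)-p$ then contribute terms in bidegrees such as $(l-j-d,\,j)$ and $(l-j,\,j-d)$, not only terms with a $k\mathcal{Z}_0$ tensor factor. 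Your second strategy is circular: gluing ``a compatible family of automorphisms of the $k\mathcal{Z}[m]$'' presupposes that the family has been constructed, which is exactly the problem at hand, and the path coalgebra is a cotensor coalgebra rather than a cofree one, so no universal property hands you a map differing from the identity only in degree $d$.

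The paper closes this gap by exhibiting $F^d_\lambda(0)$ in closed form and verifying it directly: $F^d_\lambda(0)$ fixes $p_i^l$ for $l<d$, acts as $f^d_\lambda(0)$ in degree $d$, and for $l>d$ sends $p_0^l$ to $p_0^l-\lambda p_d^{l-d}$ (with an additional summand $+\lambda p_0^{l-d}$ when $l\equiv d \pmod n$), and sends $p_i^l$ with $i\neq 0$ to $p_i^l+\lambda p_i^{l-d}$ when $i+l\equiv d\pmod n$ and to $p_i^l$ otherwise. This confirms your (correct) intuition that the extension must pick up length-$(l-d)$ corrections in high degrees, but to turn your proposal into a proof you would have to either produce such an explicit formula and check $\Delta\circ F=(F\otimes F)\circ\Delta$ on it, or else prove an actual unobstructedness statement for the degree-by-degree lifting; neither is carried out in the proposal.
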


\begin{proof}
The claim that $f^d_\lambda(0)$ is a coalgebra automorphism is
obvious. For the second claim, define the map $F^d_\lambda(0)$ as
follows:
\begin{eqnarray*}
F^d_{\lambda}(0): && k\mathcal{Z}\longrightarrow k\mathcal{Z} \\
&& p_i^l \mapsto p_i^l, \quad \forall \ i, \ 0 \le l \le d-1, \\
&& p_0^d \mapsto p_0^d+\lambda(1-g^d), \\
&& p_i^d \mapsto p_i^d, \quad \ 1 \le i \le n-1, \ \mathrm{and \ for}\ l > d,\\
&& p_i^l \mapsto \left\{
                   \begin{array}{ll}
                     p_0^l-\lambda p_d^{l-d}, & \hbox{$i=0,\ l \ne d \ (\mathrm{mod}\ n)$;} \\
                     p_0^l+\lambda p_0^{l-d}-\lambda p_d^{l-d}, & \hbox{$i=0,\ l = d \ (\mathrm{mod}\ n)$;} \\
                     p_i^l+\lambda p_i^{l-d}, & \hbox{$1 \le i \le n-1, \ i+l = d \ (\mathrm{mod}\ n)$;} \\
                     p_i^l, & \hbox{$1 \le i \le n-1, \ i+l \ne d \ (\mathrm{mod}\ n)$.}
                   \end{array}
                 \right.
\end{eqnarray*}
It is straightforward (but a bit tedious) to verify that
$F_\lambda^d(0)$ is the desired coalgebra automorphism of
$k\mathcal{Z}.$ The rest claims are easy.
\end{proof}

\subsection{}
First we recall the graded Hopf structures on $k\mathcal{Z}.$ By
\cite{cr2}, they are in one-to-one correspondence with the
$kG$-module structures on $ka_0,$ and in turn with the set of $n$-th
root of unity. For each $q \in k$ with $q^n=1,$ let $g.a_0=qa_0$
define a $kG$-module. The corresponding $kG$-Hopf bimodule is $kG
\otimes_{kG} ka_0 \otimes kG = ka_0 \otimes kG.$ We identify
$a_i=a_0 \otimes g^i.$ This is how we view $k\mathcal{Z}_1$ as a
$kG$-Hopf bimodule. The following path multiplication formula
\begin{equation}
p_i^l \cdot p_j^m= q^{im} {{l+m}\choose l} _q p_{i+j}^{l+m} \ .
\end{equation}
was given in \cite{cr2} by induction. In particular,
\begin{equation}
g \cdot p_i^l=q^lp_{i+1}^l, \ \ p_i^l \cdot g=p_{i+1}^l, \ \
a_0^l=l_q!p_0^l \ .
\end{equation}
For each $q,$ the corresponding graded Hopf algebra is denoted by
$k\mathcal{Z}(q).$

We consider in the following lemma the algebraic side of
$k\mathcal{Z}(q).$ The facts are our starting point of the preferred
deformation process.

\begin{lem}
As an algebra, $k\mathcal{Z}(q)$ can be presented by generators with
relations as follows:
\begin{enumerate}
  \item when $q=1,$ generators: $g, \ a_0.$ relations: $g^n=1,\ ga_0=a_0g.$
  \item when $\operatorname{ord}(q)=d>1,$ generators: $g, \ a_0, \ p_0^d.$
  relations: $g^n=1, \ ga_0=qa_0g, \ a_0^d=0, \ a_0p_0^d=p_0^da_0, \ gp_0^d=p_0^dg.$
\end{enumerate}
\end{lem}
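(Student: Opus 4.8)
The plan is to exhibit in each case an explicit algebra surjection from the free algebra on the listed generators modulo the listed relations onto $k\mathcal{Z}(q)$, and then to check that source and target have the same Hilbert series, which forces the map to be an isomorphism. I would set up the generators-to-elements map using the path multiplication formulas (3.1) and (3.2): $g \mapsto g$, $a_0 \mapsto a_0$, and in case (2) also $p_0^d \mapsto p_0^d$. The relations are each verified directly from (3.1)--(3.2): $g^n = 1$ holds in $kG \subset k\mathcal{Z}(q)$; $ga_0 = q a_0 g$ (resp. $ga_0 = a_0 g$ when $q=1$) is the $l=1$ instance of the first identity in (3.2); when $\operatorname{ord}(q) = d > 1$ we have $a_0^d = d!_q\, p_0^d = 0$ since $d_q = 0$; and the centrality of $p_0^d$, i.e. $a_0 p_0^d = p_0^d a_0$ and $g p_0^d = p_0^d g$, follows from (3.1) together with $\binom{d+1}{1}_q = (d+1)_q = 1$ and the observation that $q^{d\cdot 1} = 1$, so that conjugating $p_0^d$ by $a_0$ or by $g$ returns $p_0^d$ (using $g^n = 1$ to see $p_d^d$, $p_1^d$ etc. land back on the nose). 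So the map descends to an algebra homomorphism from the presented algebra $A$ onto the subalgebra of $k\mathcal{Z}(q)$ generated by these elements.

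Next I would argue surjectivity, which amounts to showing the listed generators generate $k\mathcal{Z}(q)$ as an algebra. By (3.2) we have $a_0^l = l!_q\, p_0^l$, so in case (1), where every $l!_q \ne 0$, each $p_0^l$ is a scalar multiple of $a_0^l$, and then $p_i^l = p_0^l \cdot g^i$ recovers every path; thus $g, a_0$ suffice. In case (2), for $l$ not divisible by $d$ we again get $p_0^l$ from $a_0^l$ up to a nonzero scalar, while for $l = md$ the relation $\binom{md}{d,d,\dots,d}$-type computation, or more simply the iterated product $p_0^d \cdot p_0^d \cdots p_0^d$ ($m$ factors), yields a nonzero multiple of $p_0^{md}$ because the relevant Gaussian binomials $\binom{2d}{d}_q, \binom{3d}{d}_q, \dots$ are all nonzero (each is a product of terms $j_q$ with $j \not\equiv 0 \pmod d$, using the vanishing criterion for $\binom{l+m}{l}_q$ recalled in \S3.1); for the intermediate lengths $l = md + r$ with $1 \le r \le d-1$ one multiplies $p_0^{md}$ by $a_0^r$. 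Finally $p_i^l = p_0^l \cdot g^i$. Hence the map $A \twoheadrightarrow k\mathcal{Z}(q)$ is onto.

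It remains to show injectivity, for which I would invoke Bergman's Diamond Lemma \cite{b} — exactly the tool the introduction advertises. Order the generators $g < a_0$ (case 1) or $g < a_0 < p_0^d$ (case 2) and put a degree-lexicographic order on words. Orient the relations as rewriting rules: $a_0 g \to q^{-1} g a_0$ (or $a_0 g \to g a_0$), $g^n \to 1$, and in case (2) also $a_0^d \to 0$, $p_0^d a_0 \to a_0 p_0^d$, $p_0^d g \to g p_0^d$. One then checks that all ambiguities (overlaps such as $a_0 g^n$, $a_0^d g$, $p_0^d a_0 g$, $a_0 \cdot a_0^{d-1} \cdot a_0$, $p_0^d a_0^d$, etc.) are resolvable; since the relations are essentially commutation and a monomial-killing rule, resolvability is routine. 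The Diamond Lemma then gives a $k$-basis of $A$ consisting of the irreducible words: $\{ g^i a_0^l : 0 \le i \le n-1,\ l \ge 0 \}$ in case (1), and $\{ g^i a_0^r (p_0^d)^m : 0 \le i \le n-1,\ 0 \le r \le d-1,\ m \ge 0 \}$ in case (2). In each case one counts that the number of basis elements in each path-length degree matches $\dim k\mathcal{Z}_l = n$, so the Hilbert series of $A$ and of $k\mathcal{Z}(q)$ agree, and the surjection $A \twoheadrightarrow k\mathcal{Z}(q)$ is therefore an isomorphism. The main obstacle is the bookkeeping in case (2): correctly pinning down which $j_q$ vanish so that the only surviving relation among the $p_0^l$ is $a_0^d = 0$ (equivalently, that $(p_0^d)^m$ is a nonzero multiple of $p_0^{md}$ and that no further relation is hidden), and confirming via the Diamond Lemma that the chosen rewriting system is complete rather than merely sound.
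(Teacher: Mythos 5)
Your proposal is correct and follows essentially the same route as the paper: both rest on the path multiplication formulas (3.1)--(3.3) to see that the relations hold and that $g,a_0$ (and $p_0^d$) generate, and both invoke Bergman's Diamond Lemma to produce the normal-form basis $\{g^ia_0^l\}$ resp.\ $\{g^ia_0^r(p_0^d)^m\}$ of the presented algebra. The only cosmetic difference is that the paper writes down the explicit linear bijection $p^la^jh^i\mapsto j!_q\,p_i^{j+dl}$ and checks it is multiplicative, whereas you obtain the algebra map from the universal property and conclude bijectivity by a degreewise dimension count; the two finishes are interchangeable.
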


\begin{proof}
The claim about the generators and the relations they satisfy is
direct consequence of (3.1) and (3.2). In particular, for the case
$\operatorname{ord}(q)=d>1,$ we have
\begin{equation}
(p_0^d)^l=p_0^{dl}, \ \ p_0^{dl} a_0^j=j!_qp_0^{j+dl} \ .
\end{equation}
It suffices to prove conversely the relations are enough to define
$k\mathcal{Z}(q).$

Let $H(q)$ denote the algebra defined in the lemma. To avoid
confusion, we use new notations for the generators: change $g$ to
$h,$ $a_0$ to $a,$ and $p_0^d$ to $p.$ The relations are obtained by
substituting the old notations by the new ones.

For the case $q=1,$ by the well-known diamond lemma \cite{b}
$\{a^lh^i \ | \ 0 \le i \le n-1, \ l \ge 0 \}$ is a basis of $H(1).$
Now define a linear map $f: \ H(1) \longrightarrow k\mathcal{Z}(1),
\ \ a^lh^i \longmapsto l!p_i^l \ .$ Apparently this is a linear
isomorphism. It remains to check that it respects the
multiplication. This is again direct consequence of (3.1) and (3.2):
\[
f((a^lh^i)(a^mh^j))=(l+m)!p_{i+j}^{l+m}
=(l!p_i^l)(m!p_j^m)=f(a^lh^i)f(a^mh^j) \ .
\]

For the case $\operatorname{ord}(q)=d>1,$ the set $\{p^la^jh^i \ | \
0 \le i \le n-1, \ 0 \le j \le d-1, \ l \ge 0 \}$ is a basis of
$H(q),$ again by the diamond lemma. Define a linear map
\[
h: \ H(q) \longrightarrow k\mathcal{Z}(q), \ \ p^la^jh^i \longmapsto
j!_q p_i^{j+dl} \ .
\]
Similarly one can verify by direct computation with a help of (3.1)
and (3.3) that this is an algebra isomorphism:
\begin{eqnarray*}
f((p^la^jh^i) (p^{l'}a^{j'}h^{i'}) &=&
q^{ij'}(j+j')!_qp_{i+i'}^{j+j'+d(l+l')} \\  &=&
(j!_qp_i^{j+dl})(j'!_qp_{i'}^{j'+dl'}) \\ &=& f(p^la^jh^i)
f(p^{l'}a^{j'}h^{i'}) \ .
\end{eqnarray*}
\end{proof}

\subsection{}
Now we are ready to state the the main result of this section. We
classify all the (non-graded) Hopf structures on the path coalgebra
$k\mathcal{Z}.$

\begin{thm} Let $H$ be a Hopf structure on $k\mathcal{Z}$
with $\operatorname{gr}H \cong k\mathcal{Z}(q).$ Then as algebra, it
can be presented by generators and relations as follows:
\begin{enumerate}
  \item if $q=1,$ generators: $g, \ a_0.$ relations: $g^n=1, \
  ga_0=a_0g.$ In particular, the Hopf algebra $H$ is isomorphic to
  $k\mathcal{Z}(1).$
  \item if $\operatorname{ord}(q)=n,$ generators: $g, \ a_0, \ p_0^n.$ relations: $g^n=1,
  \ a_0^n=0, \ ga_0=qa_0g, \ gp_0^n=p_0^ng, \ a_0p_0^n-p_0^na_0=\lambda a_0$ with some
  $\lambda \in k.$
  \item if $1<\operatorname{ord}(q)=d<n,\ n \ne 2d,$ generators: $g, \ a_0, \ p_0^d.$ relations: $g^n=1,
  \ a_0^d=0, \ ga_0=qa_0g, \ gp_0^d=p_0^dg, \ a_0p_0^d-p_0^da_0=0.$
  In other words, the Hopf algebra $H$ is isomorphic to $k\mathcal{Z}(q).$
  \item if $n=2d$ is even and $\operatorname{ord}(q)=d,$ generators: $g, \ a_0, \ p_0^n.$ relations: $g^n=1,\
  a_0^d=\mu(1-g^d),\ ga_0=qa_0g, \ gp_0^d=p_0^dg, \ a_0p_0^d-p_0^da_0= \frac{\mu(1-q)}{(d-1)_q}a_0(1+g^d)$
  with some $\mu \in k.$
\end{enumerate}
\end{thm}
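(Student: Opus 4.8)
The plan is to follow the preferred-deformation strategy set up in the previous subsections: start from the graded algebra $k\mathcal{Z}(q)$, whose generators and relations are given in Lemma 3.5, lift these generators to the non-graded Hopf algebra $H$ via Lemma 2.1, and then determine exactly which deformed relations can occur. Since $\gr H\cong k\mathcal{Z}(q)$, the generating set $\{g,a_0\}$ (when $q=1$) or $\{g,a_0,p_0^d\}$ (when $\operatorname{ord}(q)=d>1$) of the graded algebra lifts to a generating set of $H$; abusing notation we keep the same names for the lifts. The coradical filtration forces each lifted relation of $k\mathcal{Z}(q)$ to hold in $H$ only up to terms of strictly lower filtration degree, so one writes, for each defining relation $r=0$ of the graded algebra, $r=(\text{lower-degree correction})$ in $H$, and the whole problem becomes pinning down those correction terms.

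First I would handle the group-like relation $g^n=1$: since $g\in G(H)=\mathbb{Z}/n$ this is automatic, and likewise the commutation $ga_0=qa_0g$ and $gp_0^d=p_0^dg$ are forced because $g$ acts on the one-dimensional spaces $k\bar a_0$ and $k\bar p_0^d$ in $\gr H$ by scalars and conjugation by $g$ is a coalgebra automorphism; a degree/eigenvalue count (using that the only group-likes are the $g^i$ and that $a_0$, $p_0^d$ are $(1,g)$- and $(1,g^d)$-skew-primitive up to lower terms) shows no correction is possible. Next, the relation $a_0^d=0$ (resp. nothing, when $q=1$) may deform to $a_0^d=\mu(1-g^d)$ for some $\mu\in k$: one computes $\Delta(a_0^d)$ using the $q$-binomial formula and the fact that $\binom{d}{j}_q=0$ for $0<j<d$ when $\operatorname{ord}(q)=d$, concluding $a_0^d$ is $(1,g^d)$-skew-primitive modulo the coradical, hence $a_0^d=\alpha(1-g^d)+\beta p_0^d$ for scalars; the $p_0^d$ term is absorbed by redefining the lift of $p_0^d$ (this is exactly where the coalgebra automorphisms $F^d_\lambda(j)$ of Lemma 3.2 enter — they let us change representatives of $p_0^d$ while staying inside a Hopf structure on $k\mathcal{Z}$), and when $d<n$ with $n\neq 2d$ the element $1-g^d$ lies in too low a filtration degree to match $a_0^d$ unless $\alpha=0$, forcing $\mu=0$; when $d=n$ the target $1-g^n=0$ so again the deformation is trivial there but the bracket relation below can still deform; when $n=2d$ the degrees match and $\mu$ survives. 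Finally the commutator $a_0p_0^d-p_0^da_0$ is a $(1,g^{d+1})$-primitive-type element of filtration degree $d+1$, so it must be a linear combination of the degree-$\le d+1$ elements of the appropriate "type", namely $a_0$, $a_0g^d$, and $p_0^{?}$-terms; matching skew-primitive types and using $\Delta$ pins the coefficient, and a consistency computation (applying $\Delta$ to $a_0^d$ and to the bracket, or equivalently computing $[a_0^d,p_0^d]$ two ways) forces the precise value $\frac{\mu(1-q)}{(d-1)_q}$ in case (4) and $0$ in case (3); in case (2) the coefficient $\lambda$ of $a_0$ is unconstrained because $g^n=1$ makes the two a priori different right-hand sides coincide.

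I would then close the argument by verifying that each family of relations actually defines an algebra of the correct dimension in each graded piece — applying Bergman's Diamond Lemma \cite{b} exactly as in the proof of Lemma 3.5, with the same ambiguities $\{(a_0^d)a_0,\ (a_0p_0^d)\cdots,\ \ldots\}$ to resolve — so that the deformed presentation has a PBW-type basis matching the basis $\{p_i^l\}$ of $k\mathcal{Z}$, hence the candidate algebra really is a Hopf structure on the path coalgebra $k\mathcal{Z}$ and degenerates to $k\mathcal{Z}(q)$. The main obstacle I anticipate is twofold: bookkeeping the skew-primitive "types" correctly so as to know which low-degree elements are allowed in each correction term (this is delicate because $1-g^d$, $a_0$, $a_0g^d$, and lifts of shorter paths all interact), and running the Diamond Lemma overlap computations for the most complicated case $n=2d$ — there the $(1-g^d)$ on the right of $a_0^d$ and the $a_0(1+g^d)$ on the right of the bracket create several ambiguities whose resolution is exactly the identity forcing $\lambda=\frac{\mu(1-q)}{(d-1)_q}$, and one must check these are confluent. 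Everything else is the routine $q$-binomial manipulation already exemplified in the proofs of Lemmas 3.4 and 3.5.
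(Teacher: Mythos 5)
Your overall strategy coincides with the paper's: lift the generators of $\gr H\cong k\mathcal{Z}(q)$ via Lemma 2.1, compute $\Delta$ of each candidate relation to locate it in a skew-primitive subspace of the path coalgebra, normalize by the coalgebra automorphisms of Lemma 3.1, and finish with the Diamond Lemma. However, there is a genuine gap at the step that actually separates cases (3) and (4). You assert that for $1<d<n$, $n\neq 2d$, the deformation $a_0^d=\mu(1-g^d)$ is excluded because ``$1-g^d$ lies in too low a filtration degree to match $a_0^d$,'' and that when $n=2d$ ``the degrees match.'' This cannot be right: a preferred deformation by definition adds terms of strictly lower coradical degree, so a degree count never obstructs such a correction (indeed in case (4) the surviving term $\mu(1-g^d)$ has degree $0$ against degree $d$, and Theorem 5.2 shows $a^d=\mu(1-g^d)$ is consistent for every $d\mid n$ once $p_0^d$ is removed from the picture). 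The paper's actual mechanism is the consistency computation you mention only in passing: iterating $[a_0,p_0^d]=\frac{\mu(1-q)}{(d-1)_q!}a_0(1+g^d)+\lambda(1-g^{d+1})$ gives
\[
a_0^dp_0^d=p_0^da_0^d+d\,\frac{\mu^2(1-q)}{(d-1)_q!}\,(1-g^{2d})+d\lambda a_0^{d-1},
\]
and since $a_0^d=\mu(1-g^d)$ commutes with $p_0^d$ the right-hand corrections must vanish; because $d\mid n$ and $d<n$, one has $g^{2d}=1$ exactly when $n=2d$, so $\mu$ is killed precisely when $n\neq 2d$ (and $\lambda=0$ always, from the independent $a_0^{d-1}$ term). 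You have this computation in hand but use it only to pin the bracket coefficient, not to kill $\mu$; without it the case distinction (3) versus (4) is unproved.

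Two smaller points. First, the claim that ``no correction is possible'' for $ga_0=qa_0g$ is inaccurate: a correction $\alpha(1-g)$ does occur a priori, and it is removed by replacing $a_0$ with $a_0-\frac{\alpha}{1-q}(1-g)$ via the automorphism $F^1_\lambda$ of Lemma 3.1 when $q\neq1$, while for $q=1$ it is killed only by iterating conjugation $n$ times against $g^n=1$ (this is exactly the step whose failure on the linear chain produces the non-graded algebra $k\mathcal{A}(1,\lambda)$ in Theorem 4.3(1), so it cannot be waved away). Second, the term $\beta p_0^d$ you allow in the $(1,g^d)$-skew-primitive space is spurious: for $d>1$ the path $p_0^d$ is not skew-primitive, and the space in question is spanned by $1-g^d$ together with arrows from $1$ to $g^d$, of which there are none in $\mathcal{Z}$ for $1<d\le n$.
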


The proof will be separated into several steps. The main idea is to
determine all the possible preferred deformations from the graded
ones, with a help of the quiver.

\subsection{}
If $\operatorname{gr}H \cong k\mathcal{Z}(1),$ then $H$ is generated
by $g$ and $a_0,$ by Lemma 3.2 (1). We only need to give all the
possible relations involved by them. This suffices to consider all
the possible preferred deformations of the graded generating
relations. In this situation, we need to determine the
\[
lower \ terms=ga_0g^{-1}-a_0 \ .
\]
By $\Delta(g \cdot a_0 \cdot
g^{-1})=\Delta(g)\Delta(a_0)\Delta(g^{-1})=g \cdot a_0 \cdot g^{-1}
\otimes 1 +g \otimes g \cdot a_0 \cdot g^{-1} \ ,$ we can conclude
that $g \cdot a_0 \cdot g^{-1} \in \ ^g(k\mathcal{Z})^1,$ hence
\[
g \cdot a_0 \cdot g^{-1}-a_0 = \lambda (1-g)
\]
for some $\lambda \in k.$ The relation $g^n=1$ is stable under
deformation. Note that
\[
a_0=g^n \cdot a_0 \cdot g^{-n}=a_0+n\lambda(1-g) \ ,
\]
This forces $\lambda = 0.$ Therefore there are no non-trivial
preferred deformations for $k\mathcal{Z}(1).$

\subsection{}
If $\operatorname{ord}(q)=n$ and $\operatorname{gr}H \cong
k\mathcal{Z}(q),$ then $H$ is generated by $g, \ a_0, \ p_0^n$ by
Lemma 3.2 (2). As before, we need to determine all the possible
preferred deformations of the graded generating relations in Lemma
3.2 (2).

Firstly, the relation $ga_0=qa_0g$ might be deformed to
\[ga_0g^{-1}=qa_0+\alpha(1-g)\] for some $\alpha \in k.$ Let
$\widetilde{a_0}=a_0-\frac{\alpha}{1-q}(1-g),$ then we have
$g\widetilde{a_0}g^{-1}=q\widetilde{a_0}.$ Set
$\lambda=\frac{\alpha}{1-q}$ and $f_\lambda^1$ as in Lemma 3.1. It
follows that the map $f_\lambda^1$ can be extended to a coalgebra
automorphism $F^1_\lambda$ of $k\mathcal{Z}.$ Now the original Hopf
structure can be transported through $F^1_\lambda$ to one with
$ga_0=qa_0g.$ By iterative application of the lemma, we can have
through coalgebra automorphism (or base change)
\begin{equation}
a_0g^i=a_i,\ a_0^lg^i=l_q!p_i^l,\ i=0,1,\cdots,n-1,\ l=1,\cdots,n-1.
\end{equation}
Note that under such coalgebra automorphisms, the ``new" elements
$g, \ a_0, \ p_0^n$ preserve to be generators of $H$ according to
Lemma 3.2.

Secondly, consider the relation $a_0^n=0.$ In order to see to what
it may be deformed, we should look at $\Delta(a_0^n).$  By the
Gaussian binomial formula (see e.g. \cite{kassel}, Prop. IV.2.2), we
have
\begin{eqnarray*}
\Delta(a_0^n)&=&(\Delta(a_0))^n=(a_0 \otimes 1 + g \otimes a_0)^n \\
&=&\sum_{i=0}^n \binom{n}{i}_q a_0^ig^{n-i} \otimes a_0^{n-i} \\ &=&
a_0^n \otimes 1 +1 \otimes a_0^n \ .
\end{eqnarray*}
This follows that
\begin{equation} a_0^n=0, \end{equation}
since in $k\mathcal{Z}$ there is no loop attached to $1.$

Finally we consider the relations involved $p_0^n.$ Similarly, by
\[
\Delta(gp_0^n-p_0^ng)=(gp_0^n-p_0^ng) \otimes g + g \otimes
(gp_0^n-p_0^ng)
\]
we have $gp_0^n-p_0^ng=0.$ By (3.4) and (3.5) we have the following
equations:
\begin{eqnarray*}
\D(a_0p_0^n)&=&\D(a_0)\D(p_0^n)\\
            &=&(a_0 \otimes 1 +g \otimes a_0)(\sum_{l=0}^np_l^{n-l}
                \otimes p_0^l) \\
            &=&\sum_{l=0}^n a_0p_l^{n-l} \otimes p_0^l +
               \sum_{l=0}^n gp^{n-l}_l \otimes a_0p_0^l \\
            &=&\sum_{l=1}^n p_l^{n+1-l} \otimes p_0^l + a_0p_0^n
               \otimes 1 +g^{n+1} \otimes a_0p_0^n \\
\D(p_0^na_0)&=&\D(p_0^n)\D(a_0)\\
            &=&(\sum_{l=0}^np_l^{n-l}
                \otimes p_0^l)(a_0 \otimes 1 +g \otimes a_0) \\
            &=&\sum_{l=0}^n p_l^{n-l}a_0 \otimes p_0^l +
               \sum_{l=0}^n p^{n-l}_lg \otimes p_0^la_0 \\
            &=&\sum_{l=1}^n p_l^{n+1-l} \otimes p_0^l + p_0^na_0
               \otimes 1 +g^{n+1} \otimes p_0^na_0 \\
\end{eqnarray*}
Let $[a_0, p_0^n]=a_0p_0^n-p_0^na_0^n.$ Then the previous equations
give rise to
\[
\D([a_0, p_0^n])=[a_0, p_0^n] \otimes 1 + g \otimes [a_0, p_0^n].
\]
Now from the structure of the space of $(1,g)$-primitive elements of
$k\mathcal{Z},$ we have \[ [a_0, p_0^n]=\lambda a_0 + \mu(1-g) \]
for some $\lambda,\mu \in k.$ By induction, one gets
\[
a_0^np_0^n=p_0^na_0^n + n \lambda a_0^n + n\mu a_0^{n-1} \ .
\]
With (3.4) and (3.5), this forces $\mu=0.$

\subsection{}
Now assume $1<\operatorname{ord}(q)=d<n$ and $\operatorname{gr}H
\cong k\mathcal{Z}(q),$ then $H$ is generated by $g, \ a_0, \
p_0^d.$ Repeat the argument in Subsection 3.5, we can assume without
loss of generality for $i=0,1,\cdots,n-1,\ l=1,\cdots,d-1$ that
\begin{equation}
ga_0=qa_0g,\ a_0g^i=a_i,\ a_0^lg^i=l_q!p_i^l.
\end{equation}

Consider $\D(a_0^d).$ By Gaussian binomial formula again, we have
\begin{eqnarray*}
\Delta(a_0^d)&=&(\Delta(a_0))^d=(a_0 \otimes 1 + g \otimes a_0)^d \\
&=&\sum_{i=0}^d \binom{d}{i}_q a_0^ig^{d-i} \otimes a_0^{d-i} \\ &=&
a_0^d \otimes 1 +g^d \otimes a_0^d \ .
\end{eqnarray*}
Since in $k\mathcal{Z}$ there is no arrow going from $1$ to $g^d,$
hence it follows that
\begin{equation}
a_0^d=\mu (1-g^d),
\end{equation}
with some $\mu \in k.$

We continue to consider the relations involved $p_0^d.$ By
\[
\Delta(gp_0^d-p_0^dg)=(gp_0^d-p_0^dg) \otimes g + g^{d+1} \otimes
(gp_0^d-p_0^dg)
\]
it follows $gp_0^d-p_0^dg=\nu(g-g^{d+1})$ for some $\nu \in k.$ By
induction we have $g^np_0^d-p_0^dg^n=n\nu(1-g^d).$ Since $g^n=1,$ we
conclude that $\nu=0$ and
\begin{equation}
gp_0^d=p_0^dg.
\end{equation}

Finally we consider $\D([a_0,p_0^d]).$ By (3.6), (3.7) and (3.8) we
have the following:
\begin{eqnarray*}
\D(a_0p_0^d)&=&\D(a_0)\D(p_0^d)\\
            &=&(a_0 \otimes 1 +g \otimes a_0)(\sum_{l=0}^dp_l^{d-l}
                \otimes p_0^l) \\
            &=&\sum_{l=0}^d a_0p_l^{d-l} \otimes p_0^l +
               \sum_{l=0}^d gp^{d-l}_l \otimes a_0p_0^l \\
            &=&\sum_{l=1}^d p_l^{d+1-l} \otimes p_0^l + a_0p_0^d
               \otimes 1 + g^{d+1} \otimes a_0p_0^d \\
            & & + a_0p_1^{d-1} \otimes a_0 + gp_{d-1}^1 \otimes a_0p_0^{d-1}\\
            &=& \sum_{l=1}^d p_l^{d+1-l} \otimes p_0^l + a_0p_0^d
               \otimes 1 + g^{d+1} \otimes a_0p_0^d \\
            & & + \frac{\mu}{(d-1)_q!}(g-g^{d+1}) \otimes a_0 +\frac{q\mu}{(d-1)_q!}p_d^1 \otimes (g-g^{d+1})\\
\D(p_0^da_0)&=&\D(p_0^d)\D(a_0)\\
            &=&(\sum_{l=0}^dp_l^{d-l}
                \otimes p_0^l)(a_0 \otimes 1 +g \otimes a_0) \\
            &=&\sum_{l=0}^d p_l^{d-l}a_0 \otimes p_0^l +
               \sum_{l=0}^d p^{d-l}_lg \otimes p_0^la_0 \\
            &=&\sum_{l=1}^d p_l^{d+1-l} \otimes p_0^l + p_0^da_0
               \otimes 1+ g^{d+1} \otimes p_0^da_0 \\
            & &+ p_1^{d-1}a_0 \otimes a_0 + p_{d-1}^1g \otimes p_0^{d-1}a_0\\
            &=& \sum_{l=1}^d p_l^{d+1-l} \otimes p_0^l + a_0p_0^d
               \otimes 1 + g^{d+1} \otimes a_0p_0^d \\
            & & + \frac{q\mu}{(d-1)_q!}(g-g^{d+1}) \otimes a_0 +\frac{\mu}{(d-1)_q!}p_d^1 \otimes (g-g^{d+1})\\
\end{eqnarray*}
These equations lead to
\begin{eqnarray*}
& &\D\{[a_0,p_0^d]-\frac{\mu(1-q)}{(d-1)_q!}(a_0+p_d^1)\} \\
&=&\{[a_0,p_0^d]-\frac{\mu(1-q)}{(d-1)_q!}(a_0+p_d^1)\} \otimes 1 +
g^{d+1} \otimes
\{[a_0,p_0^d]-\frac{\mu(1-q)}{(d-1)_q!}(a_0+p_d^1)\}\ .
\end{eqnarray*}
It follows as before that
\[
[a_0,p_0^d]=\frac{\mu(1-q)}{(d-1)_q!}a_0(1+g^d)+\lambda(1-g^{d+1})
\]
for some $\lambda \in k.$ Again by induction we have
\[
a_0^dp_0^d=p_0^da_0^d +
d\frac{\mu^2(1-q)}{(d-1)_q!}(1-g^{2d})+d\lambda a_0^{d-1} \ .
\]
So $\mu=\lambda=0$ if $n \ne 2d,$ or $\lambda=0$ otherwise. Now we
can conclude that
\begin{equation}
[a_0,p_0^d]=\left\{
              \begin{array}{ll}
                \frac{\mu(1-q)}{(d-1)_q!}a_0(1+g^d), & \hbox{if n=2d;} \\
                0, & \hbox{otherwise.}
              \end{array}
            \right.
\end{equation}

\subsection{}
So far we have proved that the Hopf structures on $k\mathcal{Z}$
must be generated by $g,\ a_0, \ p_0^d$ (where
$d=\operatorname{ord}(q))$ and satisfy the relations presented in
Theorem 3.3. To complete the proof it suffices to verify that these
relations are enough to define Hopf structures on $k\mathcal{Z}.$ We
only need to prove the cases of $\operatorname{ord}(q)=n$ and
$\operatorname{ord}(q)=n/2,$ since otherwise the Hopf structures are
graded and was done in Lemma 3.2.

The verification is sort of routine, similar to that for graded
case. We only prove the case of $\operatorname{ord}(q)=n.$ The case
of $\operatorname{ord}(q)=n/2$ can be done in a similar manner.
Assume an algebra $\mathcal{C}(q,\lambda)$ is defined by generators
$h, \ a, \ p$ with relations
\[h^n=1, \ a^n=0, \ ha=qah, \ hp=ph, \ ap-pa=\lambda a. \] By the
diamond lemma, the algebra has \[\{ p^ka^jh^i \ | \ 0 \le i,j \le
n-1, \ k \ge 0 \}\] as a basis. Since the Hopf algebra $H$ has a
basis of similar form
\[\{ (p_0^n)^ka_0^jg^i \ | \ 0 \le i,j \le n-1, \ k \ge 0 \},\]
we can define a linear isomorphism $F: \ \mathcal{C}(q,\lambda)
\longrightarrow H$ by sending $p^ka^jh^i$ to $(p_0^n)^ka_0^jg^i.$ It
is an algebra map by direct calculation. This completes the proof.

\subsection{}
We summarize in the following all the Hopf algebra structures living
on $k\mathcal{Z}.$ We denote by $k\mathcal{Z}(n,q,\lambda)$ the Hopf
algebra defined by (2) of Theorem 3.3, and by
$k\mathcal{Z}(\frac{n}{2},q,\mu)$ the Hopf algebra defined by (4) of
Theorem 3.3. The verification of the statements about Hopf algebra
isomorphism is routine, so is omitted.

\begin{thm}
Let $\mathcal{Z}$ be a basic $n$-cycle and $k\mathcal{Z}$ the
associated path coalgebra.
\begin{enumerate}
  \item If $n$ is odd, then the Hopf structures on $k\mathcal{Z}$ are
        given by $k\mathcal{Z}(q)$ (graded) and
        $k\mathcal{Z}(n,q,\lambda)$ (non-graded). We have Hopf
        algebra isomorphism $k\mathcal{Z}(q) \cong k\mathcal{Z}(q')$
        if and only if $q=q';$ $k\mathcal{Z}(n,q,\lambda) \cong
        k\mathcal{Z}(n,q',\lambda')$ if and only if $q=q'$ and there
        exists some $0 \ne \zeta \in k$ such that $\lambda =
        \zeta^n \lambda'.$
  \item If $n$ is even, then the Hopf structures on $k\mathcal{Z}$ are
        given by $k\mathcal{Z}(q)$ (graded), $k\mathcal{Z}(n,q,\lambda)$ and
        $k\mathcal{Z}(\frac{n}{2},q,\mu)$ (non-graded). We have Hopf
        algebra isomorphism $k\mathcal{Z}(\frac{n}{2},q,\mu) \cong
        k\mathcal{Z}(\frac{n}{2},q',\mu')$ if and only if $q=q'$ and there
        exists some $0 \ne \zeta \in k$ such that $\mu =
        \zeta^{\frac{n}{2}} \mu'.$
\end{enumerate}
\end{thm}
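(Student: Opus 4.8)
The lists of Hopf structures in (1) and (2) are just a re-organization of Theorem 3.3 together with the realizability checks following it, sorted by the value and order of $q$; so the content of the theorem lies in the isomorphism statements, which I would deduce by showing that any Hopf algebra isomorphism $\phi$ between two of these structures is very rigid.

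First I would use that $\phi$ preserves the coradical filtration, hence restricts to an automorphism of $G(k\mathcal{Z})=\langle g\rangle\cong\mathbb Z/n$, so $\phi(g)=g^t$ with $\gcd(t,n)=1$; as $n>1$ this forces $g^t\ne e$. To pin down $t$ I would look at skew-primitives: $\phi(a_0)$ lies in the filtration term $(k\mathcal{Z})_1$ and is a $(g^t,e)$-skew-primitive which is not a scalar multiple of $g^t-e$, since $a_0$ is not a scalar multiple of $g-e$. In the basic $n$-cycle the space of $(g^t,e)$-skew-primitives inside $(k\mathcal{Z})_1$ is $2$-dimensional if $\mathcal{Z}$ contains an arrow $e\to g^t$, i.e. if $t\equiv 1\pmod n$, and $1$-dimensional otherwise; since $\phi$ restricts to an injection of the $2$-dimensional space of $(g,e)$-skew-primitives into that of $(g^t,e)$-skew-primitives, we must have $t\equiv 1$, so $\phi$ fixes every group-like and $\phi(a_0)=\alpha a_0+\beta(g-e)$ with $\alpha\in k^\times$.

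Next I would use the relation $ga_0=qa_0g$, which appears undeformed throughout Theorem 3.3(1)--(4): conjugating $\phi(a_0)$ by $g$ and matching with $q\phi(a_0)$ gives $q=q'$, and (since $q\ne 1$ in cases (2)--(4)) also $\beta=0$. In particular two of our structures with different $q$ are never isomorphic, and for $q=1$ the only structure is $k\mathcal{Z}(1)$ itself; so it remains to treat $\phi\colon k\mathcal{Z}(n,q,\lambda)\to k\mathcal{Z}(n,q,\lambda')$ and $\phi\colon k\mathcal{Z}(\frac{n}{2},q,\mu)\to k\mathcal{Z}(\frac{n}{2},q,\mu')$, where now $\phi(a_0)=\alpha a_0$ and, from $a_0^l=l!_q\,p_0^l$, also $\phi(p_0^l)=\alpha^l p_0^l$ for each $l<d:=\operatorname{ord}(q)$.

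The remaining step, which I expect to be the main obstacle, is to control $\phi(p_0^d)$. Passing to the associated graded and using that $\overline{p_0^d}$ is, up to a scalar, the unique homogeneous element of degree $d$ of its skew-primitive type in $k\mathcal{Z}(q)$, one gets $\operatorname{gr}(\phi)(\overline{p_0^d})=\gamma\,\overline{p_0^d}$ with $\gamma\in k^\times$, so that $\phi(p_0^d)=\gamma p_0^d+z$ with $z$ in the commutant of $g$ inside $H_{d-1}$, which by the PBW-type basis of Lemma 3.2 is a span of group-likes. Comparing the two ways of computing $\Delta(\phi(p_0^d))$ --- applying $\phi$ to the path coproduct of $p_0^d$, versus applying $\Delta$ to $\gamma p_0^d+z$ --- then forces $\gamma=\alpha^d$ and cuts $z$ down to an element of $k(1-g^d)$, which is $0$ when $\operatorname{ord}(q)=n$ and in all cases commutes with $a_0$. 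Finally, applying $\phi$ to the one deformed relation --- namely $a_0p_0^n-p_0^na_0=\lambda a_0$ in case (2), and $a_0^d=\mu(1-g^d)$ in case (4) --- and cancelling the resulting identity of basis elements yields $\lambda=\alpha^n\lambda'$, respectively $\mu=\alpha^{n/2}\mu'$; taking $\zeta=\alpha$ establishes necessity. For sufficiency one checks directly that, for any $\zeta\in k^\times$, the map $g\mapsto g$, $a_0\mapsto\zeta a_0$, $p_0^d\mapsto\zeta^d p_0^d$ respects every defining relation and the comultiplication, hence is a Hopf algebra isomorphism realizing the stated relation between the parameters. The genuinely delicate points here are the uniqueness-of-type claim for $\overline{p_0^d}$ and the verification that the deformed relation contributes nothing beyond that single scalar equation.
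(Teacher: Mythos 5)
Your proposal is correct, and it is worth noting that the paper gives essentially no proof of this theorem: the list of structures is read off from Theorem 3.3 exactly as you do, and the isomorphism statements are dismissed with the remark that their verification ``is routine, so is omitted.'' So you are supplying the argument the authors leave out, and your route is the natural one: rigidity of group-likes and of the spaces of $(g^t,1)$-skew-primitives forces $\phi(g)=g$; the undeformed relation $ga_0=qa_0g$ then gives $q=q'$ and kills the $(g-1)$-component of $\phi(a_0)$, so $\phi(a_0)=\alpha a_0$ and $\phi(p_i^l)=\alpha^l p_i^l$ for $l<d$; and the coproduct plus commutation with $g$ pin $\phi(p_0^d)$ down to $\alpha^d p_0^d+c(1-g^d)$, after which applying $\phi$ to the single deformed relation yields $\lambda=\alpha^n\lambda'$ (resp.\ $\mu=\alpha^{n/2}\mu'$), with sufficiency checked by the obvious rescaling map. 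The one step I would tighten is the treatment of $\operatorname{gr}(\phi)(p_0^d)$: rather than invoking a ``skew-primitive type'' (which $p_0^d$ does not have), write $\operatorname{gr}(\phi)(p_0^d)=\sum_i c_i p_i^d$ and compare the components in $k\mathcal{Z}_{d-1}\otimes k\mathcal{Z}_1$ of $(\operatorname{gr}\phi\otimes\operatorname{gr}\phi)\Delta(p_0^d)=\alpha^d\,p_1^{d-1}\otimes a_0+\cdots$ and of $\Delta\bigl(\sum_i c_i p_i^d\bigr)=\sum_i c_i\,p_{i+1}^{d-1}\otimes a_i+\cdots$; linear independence of the $p_{i+1}^{d-1}\otimes a_i$ gives $c_0=\alpha^d$ and $c_i=0$ for $i\ne 0$ in one stroke, which both establishes the uniqueness you flag as delicate and identifies $\gamma=\alpha^d$ immediately. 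A final cosmetic remark: since $k$ is algebraically closed, the condition ``there exists $\zeta\ne 0$ with $\lambda=\zeta^n\lambda'$'' just says that $\lambda$ and $\lambda'$ are both zero or both nonzero, which is exactly what your computation delivers.
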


\section{Hopf Structures on the Linear Chain}

\subsection{}
Let $G=<g>$ be an infinite cyclic group and let $\mathcal{A}$ denote
the Hopf quiver $Q(G,g).$ Then $\mathcal{A}$ is the linear chain. We
remark that this is the only possible way to view it as a Hopf
quiver. Let $e_i$ denote the arrow $g^i \longrightarrow g^{i+1}$ and
$p_i^l$ the path $e_{i+l-1} \cdots e_i$ of length $l \ge 1,$ for
each $i \in \mathbb{Z}.$ The notation $p_i^0$ is understood as
$e_i.$

Similar to the case of basic cycle, we need the following lemma to
make appropriate base change in later argument. The proof is almost
identical to Lemma 3.1, so we omit the detail.

\begin{lem}
Let $k\mathcal{A}[d]$ be the subcoalgebra $\bigoplus_{i=0}^d
k\mathcal{A}_i.$ For any $\lambda \in k$ the following linear map
\begin{eqnarray*}
f^d_{\lambda}: && k\mathcal{A}[d] \longrightarrow k\mathcal{A}[d] \\
&& p_i^l \mapsto p_i^l, \quad \forall \ i, \ 0 \le l \le d-1, \\
&& p_0^d \mapsto p_0^d+\lambda(1-g^d), \\
&& p_i^d \mapsto p_i^d, \quad \  i \ne 0.
\end{eqnarray*}
defines a coalgebra automorphism of $k\mathcal{A}[d].$ There exists
a coalgebra automorphism $F^d_\lambda: \ k\mathcal{A}
\longrightarrow k\mathcal{A}$ such that its restriction to
$k\mathcal{A}[d]$ is $f_\lambda^d.$
\end{lem}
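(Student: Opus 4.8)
The plan is to follow the blueprint of Lemma 3.1, exploiting the fact that the linear chain $\mathcal{A}$ carries the same local combinatorics as a basic cycle but with the integers in place of $\mathbb{Z}/n$; the absence of the cyclic identification actually simplifies matters, since there is no wrap-around and no relation $g^n=1$ to respect. First I would verify that $f^d_\lambda$ is a coalgebra automorphism of $k\mathcal{A}[d]$. Bijectivity is immediate: $f^d_\lambda$ is unipotent (it is the identity plus a nilpotent map that sends $p_0^d$ to $\lambda(1-g^d)$ and kills everything else reachable by iteration), with inverse $f^d_{-\lambda}$. Compatibility with the counit is clear since $\varepsilon(1-g^d)=0$. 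For comultiplication, the only nontrivial check is on $p_0^d$: using the path-coalgebra formula $\Delta(p_0^d)=\sum_{l=0}^{d} p_l^{d-l}\otimes p_0^l$ (with the convention $p_j^0=g^j$), one compares $(f^d_\lambda\otimes f^d_\lambda)\Delta(p_0^d)$ with $\Delta(p_0^d+\lambda(1-g^d))=\Delta(p_0^d)+\lambda(1\otimes 1-g^d\otimes g^d)$, and the terms match because $f^d_\lambda$ is the identity on all proper-length pieces $p_i^l$ with $l\le d-1$ and on $p_i^d$ for $i\ne 0$, while the $l=0$ and $l=d$ summands of $\Delta(p_0^d)$ produce exactly the correction $\lambda(g^d\otimes 1 + 1\otimes g^d)$ minus what is already there; I would write this out as one short display.

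Next I would construct the extension $F^d_\lambda$ on all of $k\mathcal{A}$. The idea is to prescribe $F^d_\lambda$ on paths of length $>d$ so that coalgebra compatibility is forced, mimicking the formula for $F^d_\lambda(0)$ in Lemma 3.1 but dropping the mod-$n$ case distinctions that are vacuous on $\mathbb{Z}$. Concretely I would set $F^d_\lambda(p_i^l)=p_i^l$ for $0\le l\le d$ and $i\ne 0$, $F^d_\lambda(p_0^d)=p_0^d+\lambda(1-g^d)$, and for $l>d$ define $F^d_\lambda(p_0^l)=p_0^l-\lambda p_d^{\,l-d}$ and $F^d_\lambda(p_i^l)=p_i^l$ for $i\ne 0$ (since on the chain the index $i+l$ never returns to $0$, the "$i+l\equiv d\ (\mathrm{mod}\ n)$" corrections in Lemma 3.1 disappear). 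One checks this is a bijection — again unipotent with explicit inverse — and then verifies $\Delta\circ F^d_\lambda=(F^d_\lambda\otimes F^d_\lambda)\circ\Delta$ on each $p_i^l$ by plugging into the comultiplication formula and matching summands; the verification is routine but mildly tedious, exactly as the authors note for Lemma 3.1, so I would state it as "straightforward but tedious" and display only the generic computation for $F^d_\lambda(p_0^l)$ with $l>d$.

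The main obstacle, such as it is, is bookkeeping rather than anything conceptual: one must choose the values of $F^d_\lambda$ on the higher paths so that the primitive-type correction $\lambda(1-g^d)$ introduced at level $d$ propagates consistently through all of $\Delta$, and verify no further corrections are needed at levels $>d$ on the chain. I expect this to be genuinely easier than the cyclic case because there is no collision of indices modulo $n$ and no terminal relation to maintain, which is why the authors say "the proof is almost identical to Lemma 3.1, so we omit the detail." Accordingly I would present the statement with the explicit formula for $F^d_\lambda$, the one-line unipotence argument for invertibility, and a single representative coalgebra-compatibility computation, referring to the proof of Lemma 3.1 for the remaining bracket-by-bracket checks.
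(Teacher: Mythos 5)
Your verification that $f^d_\lambda$ is a coalgebra automorphism of $k\mathcal{A}[d]$ is correct, and your general strategy (mimic the explicit extension formula of Lemma 3.1) is the intended one. But your proposed extension formula is wrong, and the error comes precisely from the parenthetical remark you use to justify dropping the third case of Lemma 3.1. The vertices of the linear chain are indexed by \emph{all} of $\mathbb{Z}$, so the condition ``$i+l\equiv d \ (\mathrm{mod}\ n)$'' from Lemma 3.1 does not become vacuous on the chain; it becomes the equality $i+l=d$, which for $l>d$ is satisfied by the negative index $i=d-l$. The path $p_{d-l}^{\,l}$ runs from $g^{d-l}$ to $g^{d}$ and contains $p_0^d$ as its terminal segment of length $d$, so its comultiplication contains the summand $p_0^d\otimes p_{d-l}^{\,l-d}$. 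With your definition $F^d_\lambda(p_i^l)=p_i^l$ for all $i\ne 0$, one gets
\[
(F^d_\lambda\otimes F^d_\lambda)\Delta\bigl(p_{d-l}^{\,l}\bigr)=\Delta\bigl(p_{d-l}^{\,l}\bigr)+\lambda(1-g^d)\otimes p_{d-l}^{\,l-d}\ne \Delta\bigl(F^d_\lambda(p_{d-l}^{\,l})\bigr),
\]
so your map is not a coalgebra morphism. The correction term of Lemma 3.1 must be retained: for $l>d$ and $i=d-l$ one needs $F^d_\lambda(p_i^l)=p_i^l+\lambda p_i^{\,l-d}$, and then the same bookkeeping as in the cyclic case (the terms $j<l-d$, $j=l-d$ and $j=l$ of the coproduct reassembling into $\lambda\Delta(p_i^{\,l-d})$) closes the verification. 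Your formula $F^d_\lambda(p_0^l)=p_0^l-\lambda p_d^{\,l-d}$ for $l>d$ is correct, and the genuine simplification over Lemma 3.1 is only that the second case there ($i=0$, $l\equiv d$ mod $n$, $l>d$) cannot occur on the chain, since $l=d$ is excluded; the third case does not disappear.

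In short: the first half of your argument stands, but the extension $F^d_\lambda$ as you define it fails on the paths ending at $g^d$ of length greater than $d$, and the fix is to keep, not discard, the corresponding correction from Lemma 3.1.
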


\subsection{}
We collect in this subsection some useful results of graded Hopf
structures on $k\mathcal{A}.$ The graded Hopf structures are in
one-one correspondence to the left $kG$-module structures on $ke_0,$
in turn to non-zero elements of $k.$ Assume $g.e_0=qe_0$ for some $0
\ne q \in k.$ The corresponding $kG$-Hopf bimodule is $ke_0 \otimes
kG.$ We identify $e_i$ and $e_0 \otimes g^i,$ and in this way we
have a $kG$-Hopf bimodule structure on $k\mathcal{A}_1.$ We denote
the corresponding graded Hopf algebra by $k\mathcal{A}(q).$ The
following lemma give the presentation of $k\mathcal{A}(q)$ by
generators with relations. The proof is routine as Lemma 3.2, so is
omitted.

\begin{lem}
The algebra $k\mathcal{A}(q)$ can be presented via generators with
relations as follows:
\begin{enumerate}
  \item when $q=1,$ generators: $g, \ g^{-1}, \ e_0.$ relations: $gg^{-1}=1=g^{-1}g, \ ge_0=e_0g.$
  \item when $q \ne 1$ is not a root of unity, generators: $g, \ g^{-1}, \ e_0.$
        relations: $gg^{-1}=1=g^{-1}g, \ ge_0=qe_0g.$
  \item when $q \ne 1$ is a root of unity of order $d,$ generators: $g, \ g^{-1}, \ e_0, \
        p_0^d.$ relations: $gg^{-1}=1=g^{-1}g, \ e_0^d=0, \ ge_0=qe_0g, \ gp_0^d=p_0^dg, \ e_0p_0^d=p_0^de_0.$
\end{enumerate}
\end{lem}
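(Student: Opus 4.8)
The plan is to establish Lemma 4.3 by following the same strategy as the proof of Lemma 3.2, only now the coefficient group is the infinite cyclic group $G=\langle g\rangle$, so the relation $g^n=1$ disappears and the loop-free obstruction used repeatedly in Section 3 gets replaced by the even simpler observation that $k\mathcal{A}$ has no arrows outside the single chain. First I would verify the ``easy'' direction: using the Hopf bimodule identification $e_i=e_0\otimes g^i$ together with the quantum shuffle product, one records the path multiplication formula $p_i^l\cdot p_j^m=q^{im}\binom{l+m}{l}_q\,p_{i+j}^{l+m}$ exactly as in formula (3.1) (the proof is the same induction as in \cite{cr2}, and is insensitive to whether the cyclic group is finite), and hence the specializations $g\cdot p_i^l=q^lp_{i+1}^l$, $p_i^l\cdot g=p_{i+1}^l$, $e_0^l=l!_q\,p_0^l$. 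From these one reads off directly that $g,g^{-1},e_0$ (and, when $q$ has finite order $d$, also $p_0^d$) generate $k\mathcal{A}(q)$ and satisfy the stated relations; in the root-of-unity case one also needs $(p_0^d)^l=p_0^{dl}$ and $p_0^{dl}e_0^j=j!_q\,p_0^{j+dl}$, which again follow from the multiplication formula together with $\binom{dl}{d}_q=1$ and $\binom{j+dl}{j}_q=j!_q^{-1}(j+dl)!_q=\cdots$ using that $d$-divisible Gaussian binomials collapse.

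For the converse direction, let $H(q)$ denote the abstract algebra presented by the generators and relations in the lemma (renaming $g,e_0,p_0^d$ to $h,e,p$ to avoid confusion). The key step is to produce a $k$-basis of $H(q)$ via Bergman's Diamond Lemma \cite{b}: ordering the generators so that the rewriting rules $he\rightsquigarrow q\,eh$, $g^{-1}$-cancellation, and (in the root case) $e^d\rightsquigarrow 0$, $pe\rightsquigarrow ep$ (there is no $pe$ rule in the non-root case and no $e^d$ rule either) are compatible, one checks that all overlap ambiguities are resolvable, obtaining that $\{e^jh^i\mid i\in\mathbb{Z},\,j\ge0\}$ is a basis in cases (1) and (2), and $\{p^le^jh^i\mid i\in\mathbb{Z},\,0\le j\le d-1,\,l\ge0\}$ in case (3). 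One then defines the evident linear map $H(q)\to k\mathcal{A}(q)$ sending $e^jh^i\mapsto j!\,p_i^j$ (cases (1),(2), with $j!$ replaced by $1$ when $q=1$ since then $j!_q=j!$ is invertible anyway — more precisely $e^jh^i\mapsto j!_q\,p_i^j$) respectively $p^le^jh^i\mapsto j!_q\,p_i^{j+dl}$ (case (3)), observes it is bijective on basis elements, and checks multiplicativity by the same one-line computation as in Lemma 3.2 using the multiplication formula.

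The main obstacle — though it is a routine rather than a conceptual one — is the Diamond Lemma bookkeeping in the presence of $g^{-1}$: unlike the finite cyclic case where $g$ alone generates and $g^n=1$ is a single clean relation, here one must treat $h$ and $h^{-1}$ as two generators with the reduction $hh^{-1}\rightsquigarrow1$, $h^{-1}h\rightsquigarrow1$, and push the commutation relation $he=qeh$ (equivalently $h^{-1}e=q^{-1}eh^{-1}$) past these, verifying that the resulting ambiguities $(hh^{-1})h$, $h(h^{-1}h)$, $(he)h^{-1}$, etc., all resolve; this is exactly the standard presentation of a skew Laurent polynomial ring and causes no real trouble, but it is the part that requires care. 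In the root-of-unity case one additionally checks that the rule $e^d\rightsquigarrow0$ does not create an unresolved overlap with $pe\rightsquigarrow ep$, which holds because $p$ is central. Since, as the authors remark, the argument is ``routine as Lemma 3.2,'' I would present only these points and refer the reader to the proof of Lemma 3.2 for the details of the multiplicativity verification.
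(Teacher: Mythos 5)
Your proposal follows exactly the route the paper intends: the paper omits the proof of this lemma with the remark that it is ``routine as Lemma 3.2,'' and your argument is precisely the Lemma 3.2 argument (path multiplication formula, Diamond Lemma basis for the presented algebra, explicit linear isomorphism onto $k\mathcal{A}(q)$) transported to the infinite cyclic group, with the $h,h^{-1}$ bookkeeping correctly identified as the only new wrinkle. One scalar slip, which you inherit from the paper's own equation (3.3): at a primitive $d$-th root of unity the $q$-Lucas theorem gives $\binom{dl}{d}_q=l$, not $1$, so in fact $(p_0^d)^l=l!\,p_0^{dl}$ and the isomorphism in case (3) should be normalized as $p^le^jh^i\mapsto l!\,j!_q\,p_i^{j+dl}$ for the multiplicativity check to close; since $\operatorname{char}k=0$ this changes nothing of substance.
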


\subsection{}
With the algebraic characterization of $k\mathcal{A}(q),$ we can
proceed to the possible preferred deformation. The classification of
Hopf structures on $k\mathcal{A}$ is given in the following.

\begin{thm} Let $H$ be a Hopf structure on $k\mathcal{A}$
with $\operatorname{gr}H \cong k\mathcal{A}(q).$ Then as algebra, it
can be presented by generators and relations as follows:
\begin{enumerate}
  \item if $q=1,$ generators: $g, \ g^{-1}, \ e_0.$ relations: $gg^{-1}=1=g^{-1}g, \
  ge_0g^{-1}=e_0+\lambda(1-g)$ with $\lambda \in \{0,1\}.$
  \item if $q \ne 1$ and is not a root of unity, generators: $g, \ g^{-1}, \ e_0.$ relations: $gg^{-1}=1=g^{-1}g, \ ge_0=qe_0g.$
  In particular, $H$ is isomorphic to $k\mathcal{A}(q).$
  \item if $q \ne 1$ is a root of unity of order $d,$ generators: $g, \ g^{-1}, \ e_0, \ p_0^d.$ relations: $gg^{-1}=1=g^{-1}g, \
  e_0^d=0, \ ge_0=qe_0g, \ e_0p_0^d=p_0^de_0, \ gp_0^d-p_0^dg=\lambda(g-g^{d+1})$ with $\lambda \in k.$
\end{enumerate}
\end{thm}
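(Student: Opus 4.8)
The strategy mirrors the one used for basic cycles in Section 3: by Lemma 2.1 together with Lemma 4.2, the Hopf algebra $H$ is generated by the same set of elements that generate $\operatorname{gr}H$ (namely $g,g^{-1},e_0$ in cases (1) and (2), and $g,g^{-1},e_0,p_0^d$ in case (3)), so the entire problem reduces to determining which \emph{preferred} (coalgebra-preserving) deformations of the graded defining relations in Lemma 4.2 can occur, and then checking that each candidate relation set actually defines a Hopf structure on $k\mathcal{A}$. First I would handle the relations that only involve $g$ and $e_0$. The equation $gg^{-1}=1=g^{-1}g$ is rigid. For the commutation relation, applying $\Delta$ to $ge_0g^{-1}$ and using that $\Delta(g\cdot e_0\cdot g^{-1})=g\cdot e_0\cdot g^{-1}\otimes 1 + g\otimes g\cdot e_0\cdot g^{-1}$ forces $ge_0g^{-1}$ to lie in the space of $(1,g)$-skew primitives of $k\mathcal{A}$, which is spanned by $e_0$ and $1-g$; hence $ge_0g^{-1}=qe_0+\alpha(1-g)$ for some $\alpha\in k$. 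When $q\neq 1$ one absorbs $\alpha$ by the substitution $\widetilde{e_0}=e_0-\frac{\alpha}{1-q}(1-g)$, exactly as in Subsection 3.6, recovering $ge_0=qe_0g$; when $q=1$ this substitution is unavailable, and $\alpha$ is a genuine invariant, but rescaling $e_0$ lets us normalize $\alpha\in\{0,1\}$ (here, unlike the cyclic case, there is no relation $g^n=1$ to kill $\alpha$, which is precisely why the linear chain produces the extra parameter $\lambda\in\{0,1\}$ in case (1)).

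Next, in case (3) where $q$ is a primitive $d$-th root of unity, I would treat the remaining graded relations $e_0^d=0$, $gp_0^d=p_0^dg$, and $e_0p_0^d=p_0^de_0$ in turn. For $e_0^d$: the quantum binomial theorem gives $\Delta(e_0^d)=e_0^d\otimes 1 + g^d\otimes e_0^d$ since the inner Gaussian binomials $\binom{d}{i}_q$ vanish for $0<i<d$, so $e_0^d$ is $(1,g^d)$-skew primitive; but in $k\mathcal{A}$ there is no arrow from $g^0$ to $g^d$ when $d>1$, so the skew-primitive space is just $k(1-g^d)$, giving $e_0^d=\mu(1-g^d)$ — and then an induction computing $e_0^dp_0^d$ versus $p_0^de_0^d$ (using the other relations) should force $\mu=0$, so $e_0^d=0$ survives unchanged. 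For $gp_0^d-p_0^dg$: computing $\Delta(gp_0^d-p_0^dg)$ shows it is $(g,g^{d+1})$-skew primitive, and the relevant skew-primitive space of $k\mathcal{A}$ is spanned by $p_d^1$ (equivalently $e_d$) and $g-g^{d+1}$; after a base change along Lemma 4.1 to remove the $p_d^1$-component (this is the role of the $F^d_\lambda$ in Lemma 4.1), what remains is $gp_0^d-p_0^dg=\lambda(g-g^{d+1})$ for some $\lambda\in k$ — and crucially, since $G$ is now \emph{infinite}, there is no relation $g^n=1$ available to force $\lambda=0$, so $\lambda$ persists as a free parameter, which is exactly the shape of case (3). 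Finally for $[e_0,p_0^d]=e_0p_0^d-p_0^de_0$: the same $\Delta$-computation as in Subsection 3.7 shows $[e_0,p_0^d]$ (after subtracting an explicit correction term if needed) is $(1,g^{d+1})$-skew primitive, forcing it into the span of $e_0g^d$ (or similar) and $1-g^{d+1}$; an induction on $e_0^dp_0^d$ versus $p_0^de_0^d$ should then kill all the coefficients, leaving $e_0p_0^d=p_0^de_0$.

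The last step is the converse: for each surviving relation set, show it really defines a Hopf algebra realized on $k\mathcal{A}$. As in Subsections 3.8–3.9, I would introduce an abstract algebra $\mathcal{C}$ presented by the listed generators and relations, apply Bergman's Diamond Lemma to obtain an explicit PBW-type basis (e.g. $\{g^i e_0^j : i\in\mathbb{Z}, j\ge 0\}$ in cases (1),(2) and $\{g^i e_0^j (p_0^d)^k : i\in\mathbb{Z},\,0\le j\le d-1,\,k\ge 0\}$ in case (3)), match it bijectively with the corresponding basis of $k\mathcal{A}$, and verify the resulting linear isomorphism is an algebra map by direct computation using the path multiplication formula; the coalgebra structure is already the given one on $k\mathcal{A}$, and compatibility of the new relations with $\Delta$ is exactly what was checked in the forward direction, so $H$ is a Hopf algebra.

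\textbf{Main obstacle.} The delicate point is the bookkeeping in case (3): correctly identifying the relevant skew-primitive subspaces of $k\mathcal{A}$ for the elements $e_0^d$, $gp_0^d-p_0^dg$, and $[e_0,p_0^d]$ (these are $(1,g^d)$-, $(g,g^{d+1})$-, and $(1,g^{d+1})$-primitives respectively, each a small explicit space), choosing the base changes from Lemma 4.1 that normalize away the "wrong" components, and then running the inductive consistency computations (comparing $e_0^dp_0^d$ with $p_0^de_0^d$, etc.) to eliminate the remaining scalars — while being careful that, because $G$ is infinite, the argument that killed parameters in the cyclic case via $g^n=1$ is \emph{not} available, so one must verify directly that $\lambda$ in case (3) and $\lambda\in\{0,1\}$ in case (1) are genuinely unobstructed rather than secretly forced to vanish.
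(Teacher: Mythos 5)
Your plan follows the paper's proof essentially step for step: reduce to preferred deformations of the graded relations in Lemma 4.2, apply $\Delta$ to each deformed relator to place it in an explicit skew-primitive subspace of $k\mathcal{A}$, normalize by coalgebra automorphisms (rescaling $e_0$ when $q=1$, absorbing $\alpha$ when $q\neq 1$), run the $e_0^dp_0^d$ versus $p_0^de_0^d$ induction to kill the obstructed scalars while noting that the absence of $g^n=1$ leaves $\lambda$ free in case (3), and finish with the Diamond Lemma. The one small slip is harmless: for $d>1$ the $(g,g^{d+1})$-skew-primitive space of $k\mathcal{A}$ is just $k(g-g^{d+1})$ (the arrow $e_d$ is $(g^d,g^{d+1})$-primitive, not $(g,g^{d+1})$-primitive, and $F^d_\lambda$ would not shift that component anyway), so $gp_0^d-p_0^dg=\lambda(g-g^{d+1})$ falls out directly without the extra base change you propose.
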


The idea of the proof is the same as that of the basic cycle case.

\subsection{}
Firstly we consider the case of $q=1.$ Assume that $H$ is a Hopf
algebra on $k\mathcal{A}$ with $\operatorname{gr}H \cong
k\mathcal{A}(1).$ Then as algebra, it is generated by $g, \ g^{-1},
\ e_0$ according to Lemmas 2.1 and 4.2. So in order to get the
defining relations, all we need to do is determine the deformations
of $ge_0g^{-1}=e_0.$ By
\[
\D(ge_0g^{-1})=(ge_0g^{-1}) \otimes 1 + g \otimes (ge_0g^{-1}),
\]
we have $ge_0g^{-1} = e_0+\lambda (1-g)$ for some $\lambda \in k.$
If $\lambda \ne 0,$ then let $E := \frac{e_0}{\lambda},$ we have
$gEg^{-1} = E+(1-g).$ Note that $H$ is generated by $g, \, E,$
therefore through the coalgebra automorphism
\begin{eqnarray*}
F: && k\mathcal{A} \longrightarrow k\mathcal{A} \\
&& g^i \mapsto g^i, \ \ \forall \ i \in \mathbb{Z}, \\
&& e_0 \mapsto \frac{e_0}{\lambda}, \ \ e_i \mapsto e_i, \ \ \forall \ i \ne 0, \\
&& p_i^l \mapsto \lambda^t p_i^l, \ \ \hbox{if $e_0$ appears $t$
   times in $p_i^l,$} \ \forall \ i \in \mathbb{Z}, \ \forall \ l \ge 2
\end{eqnarray*}
we can always reduce the relation in $H$ to the equation $ge_0g^{-1}
= e_0+(1-g)$ when $\lambda \ne 0.$

On the contrary, similar to the argument in Subsection 3.7, it is
not difficult to verify that the relations in Theorem 4.3 (1) are
actually enough to define the algebra structure of $H.$

\subsection{}
Next we consider the case when $q \ne 1$ is not a root of unity.
Assume that $H$ is a Hopf algebra on $k\mathcal{A}$ with
$\operatorname{gr}H \cong k\mathcal{A}(q).$ Again, as algebra, it is
generated by $g, \ g^{-1}, \, e_0$ according to Lemmas 2.1 and 4.2.
So we need to determine the deformation of $ge_0g^{-1}=qe_0$ to get
defining relations for $H.$ Similar to the previous argument, we
have $ge_0g^{-1} = qe_0+\lambda (1-g)$ for some $\lambda \in k.$ Now
let $\widetilde{e_0}=e_0+\frac{\lambda}{1-q},$ then we have
\[g\widetilde{e_0}g^{-1} = q\widetilde{e_0}.\]  By Lemma 4.1, we can
have a coalgebra isomorphism for the coalgebra $k\mathcal{A}$ which
sends $e_0$ to $\widetilde{e_0}.$ Now under the isomorphism, the
original Hopf structure can be transported to a new one with
relation \[ ge_0g^{-1} = qe_0. \] So in this case, the Hopf
structures are graded, and we are done with (2) of the theorem.

\subsection{}
Finally we deal with the case when $q \ne 1$ is a root of unity of
order $d.$ Assume that $H$ is a Hopf algebra on $k\mathcal{A}$ with
$\operatorname{gr}H \cong k\mathcal{Z}(q).$ In this situation, the
Hopf algebra $H$ is generated by $g, \ g^{-1}, \ e_0, \ p_0^d.$ By a
similar argument we have in the first place
\begin{eqnarray*}
&& ge_0=qe_0g, \ e_0^d=\lambda(1-g^d), \ gp_0^dg^{-1}=p_0^d +
\alpha(1-g^d), \\ &&
[e_0,p_0^d]=\frac{\lambda(1-q)}{(d-1)_q!}e_0(1+g^d)+\mu(1-g^{d+1})
\end{eqnarray*}
with some $\lambda, \alpha, \mu \in k.$ By induction we have
\[
e_0^dp_0^d=p_0^de_0^d +
d\frac{\lambda(1-q)}{(d-1)_q!}e_0(1+g^d)+d\mu e_0^{d-1} \ .
\]
Combined with the previous equalities, we have
\[
\lambda d \alpha(g^d-g^{2d})+
d\frac{\lambda^2(1-q)}{(d-1)_q!}(1-g^{2d})+d\mu e_0^{d-1}=0 \ .
\]
It follows from this equality that $\lambda=\mu=0.$ By a similar
argument we can prove the relations are enough defining relations
for $H.$

\subsection{}
To conclude this section, we summarize all the Hopf structures
arising from the linear chain $\mathcal{A}.$ Let
$k\mathcal{A}(1,\lambda)$ denote the Hopf algebra defined by (1) of
Theorem 4.3, and $k\mathcal{A}(d,q,\lambda)$ the Hopf algebra
defined by (3) of Theorem 4.3. The condition of isomorphism is also
given.

\begin{thm}
Let $\mathcal{A}$ be the linear chain and $k\mathcal{A}$ the
associated path coalgebra. All the Hopf algebra structures are given
by $k\mathcal{A}(q)$ with $0 \ne q \in k$ an arbitrary element
(graded), $k\mathcal{A}(1,\lambda)$ and $k\mathcal{A}(d,q,\lambda)$
with $q \ne 1$ a primitive $d$-th root of unity (non-graded). We
have Hopf algebra isomorphism $k\mathcal{A}(q) \cong
k\mathcal{A}(q')$ if and only if $q=q';$ $k\mathcal{A}(1,\lambda)
\cong k\mathcal{A}(1,\lambda')$ if and only if $\lambda =\lambda';$
and $k\mathcal{A}(d,q,\lambda) \cong k\mathcal{A}(d',q',\lambda')$
if and only if $d=d', \ q=q',\ \lambda = \lambda'.$
\end{thm}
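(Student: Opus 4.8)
The plan is to combine the classification of algebra presentations in Theorem 4.3 with a direct but careful study of Hopf algebra isomorphisms. First I would verify that the listed families exhaust all Hopf structures on $k\mathcal{A}$: by Theorem 4.3, a Hopf structure $H$ with $\operatorname{gr}H\cong k\mathcal{A}(q)$ is graded (hence one of the $k\mathcal{A}(q)$) when $q$ is not a root of unity, and when $q=1$ it is either graded or $k\mathcal{A}(1,\lambda)$ with $\lambda\in\{0,1\}$; here $k\mathcal{A}(1,0)=k\mathcal{A}(1)$ and $k\mathcal{A}(1,1)=:k\mathcal{A}(1,\lambda)$ with $\lambda=1$, so allowing $\lambda$ to range over all of $k$ is harmless since rescaling $e_0$ identifies all nonzero $\lambda$ (this is exactly the coalgebra automorphism $F$ built in Subsection 4.4). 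When $q$ is a primitive $d$-th root of unity the only structures are $k\mathcal{A}(q)$ (the case $\lambda=0$ in Theorem 4.3(3)) and $k\mathcal{A}(d,q,\lambda)$ for $\lambda\in k$. Thus the enumeration part is essentially a restatement of Theorem 4.3.

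The substantive part is the isomorphism classification, and I would organize it by the invariant $\operatorname{gr}H$. A Hopf algebra isomorphism $\varphi\colon H\to H'$ is in particular a coalgebra isomorphism, hence preserves the coradical filtration and induces an isomorphism $\operatorname{gr}\varphi\colon\operatorname{gr}H\to\operatorname{gr}H'$ of graded Hopf algebras; since graded Hopf structures on $k\mathcal{A}$ correspond bijectively to the parameter $q$ (the eigenvalue of $g$ acting on $ke_0$), this forces $q=q'$ (and hence $d=d'$). This already separates the three families and rules out cross-isomorphisms. It remains, within each family, to pin down the remaining parameter. For $k\mathcal{A}(q)$ there is nothing left. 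For $k\mathcal{A}(1,\lambda)$ with $\lambda\ne 0$: I would argue that after normalizing we may assume $H=k\mathcal{A}(1,1)$, so there is a single isomorphism class of non-graded ones, and it is not isomorphic to the graded $k\mathcal{A}(1)$ because the relation $ge_0g^{-1}=e_0+(1-g)$ cannot be conjugated into $ge_0g^{-1}=e_0$: any primitive lift $e_0'$ of the skew-primitive generator differs from $e_0$ by a scalar multiple of $(1-g)$, and such a shift does not change the defining relation. Writing the invariant intrinsically: the space of $(1,g)$-skew primitives modulo $g-1$ carries the adjoint action of $g$, and $[\,g,-\,]$ acts as $0$ on $k\mathcal{A}(1)$ but nontrivially (nilpotently, nonzero) on $k\mathcal{A}(1,1)$.

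For $k\mathcal{A}(d,q,\lambda)$ the key relation is $gp_0^dg^{-1}=p_0^d+\lambda(1-g^d)$, equivalently the commutator $[g,p_0^d]=\lambda(g-g^{d+1})$. Given an isomorphism $\varphi$, it must send $g\mapsto g$ (the group-likes of $k\mathcal{A}$ are $\{g^i\}$, and $\varphi$ restricted to group-likes is a group automorphism of $\mathbb{Z}$ compatible with the coalgebra grading and the identification of $\operatorname{gr}$, which pins it down to $g\mapsto g$), send $e_0$ to a $(1,g)$-skew-primitive which, after the normalization used above, we may take to be $e_0$ itself, and send $p_0^d$ to an element of degree $d$ of the form $p_0^d+(\text{lower order terms in }e_0^i g^j)$. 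Pushing the relation through $\varphi$ and comparing the coefficient of $p_d^1$ (the unique degree-$d$ ``new'' basis element), one finds $\lambda=\lambda'$; conversely equal parameters obviously give isomorphic algebras, and one checks the chosen map is a bialgebra map on generators, hence a Hopf isomorphism. I expect the main obstacle to be the bookkeeping in this last comparison: showing that no clever choice of $\varphi(p_0^d)$ (adding lower-order corrections) or of $\varphi(e_0)$ (rescaling, which is forced to be trivial once we normalize $e_0^d=\lambda(1-g^d)$ consistently) can alter $\lambda$. This amounts to checking that $\lambda$ is the value of a well-defined functional on $H$ — essentially, $\lambda = $ (coefficient extracting the $g-g^{d+1}$ part of $[g,p_0^d]$ after reducing modulo the subalgebra generated by $e_0$ and $g^{\pm1}$) — and this is invariant under all allowed changes of generators. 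Once that invariance is established, all the ``if and only if'' statements follow, and the remaining verifications (that the displayed maps are algebra maps, using the Diamond Lemma bases as in Subsections 3.7 and 4.4) are routine and I would only sketch them.
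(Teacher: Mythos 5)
The paper gives no proof of this theorem at all --- the isomorphism statements are declared ``routine'' and omitted --- so your proposal must stand on its own. It does so for the enumeration, for the family $k\mathcal{A}(q)$, and for $k\mathcal{A}(1,\lambda)$ with $\lambda\in\{0,1\}$ (commutativity separates $\lambda=0$ from $\lambda=1$, and your reduction to $q=q'$ via $\operatorname{gr}$ together with $\varphi(g)=g$ is correct, since $k\mathcal{A}$ has no nontrivial $(1,g^{-1})$-skew-primitives and hence $g\mapsto g^{-1}$ is impossible). The genuine gap is in the last family. You assert that the rescaling of $e_0$ ``is forced to be trivial once we normalize $e_0^d=\lambda(1-g^d)$ consistently,'' but in $k\mathcal{A}(d,q,\lambda)$ the relation is $e_0^d=0$ --- the deformation parameter sits in $gp_0^d-p_0^dg=\lambda(g-g^{d+1})$, not in $e_0^d$ (see Subsection 4.6, where the $\mu$ attached to $e_0^d$ is shown to vanish). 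So that normalization is vacuous and does not pin down the scalar. Concretely, for any $0\ne\zeta\in k$ the map $p_i^l\mapsto\zeta^l p_i^l$, $g^i\mapsto g^i$ is a coalgebra automorphism of $k\mathcal{A}$; sending $g\mapsto g$, $e_0\mapsto\zeta e_0$, $p_0^d\mapsto\zeta^d p_0^d$ preserves the relations $e_0^d=0$, $ge_0=qe_0g$, $e_0p_0^d=p_0^de_0$, and matches the commutator relation exactly when $\lambda=\zeta^d\lambda'$. One checks on generators that this is a bialgebra (hence Hopf) isomorphism $k\mathcal{A}(d,q,\zeta^d\lambda')\to k\mathcal{A}(d,q,\lambda')$. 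Therefore the functional you propose is not invariant under the allowed changes of generators, and comparing ``the coefficient of $p_d^1$'' cannot rescue it (that element does not occur in the relation, whose two sides live in degrees $d$ and $0$).

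The upshot is that your argument cannot close as written, and indeed the statement it is aimed at appears to be misstated in the paper: by exact analogy with Theorem 3.4, where the corresponding condition is $\lambda=\zeta^n\lambda'$ for some $0\ne\zeta\in k$, the condition here should be $\lambda=\zeta^d\lambda'$ for some $0\ne\zeta\in k$, which over an algebraically closed field says precisely that $\lambda$ and $\lambda'$ are both zero or both nonzero. What survives of your plan is the dichotomy graded versus non-graded: $k\mathcal{A}(d,q,\lambda)$ with $\lambda\ne 0$ is not isomorphic to $k\mathcal{A}(q)$, because any coalgebra-automorphism image of $p_0^d$ has the form $\zeta^dp_0^d+\gamma(1-g^d)$ and its commutator with $g$ is $\zeta^d\lambda(g-g^{d+1})\ne 0$. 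I would rewrite the final part of your proof to establish exactly this weaker (and correct) invariance, rather than the literal equality $\lambda=\lambda'$.
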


\section{Applications}
In this section, we give some direct applications of the obtained
classification results to bialgebras of type one of Nichols
\cite{n}, and simple-pointed Hopf algebras of Radford \cite{rad}.

\subsection{}
Recall that the bialgebras of type one in the sense of Nichols are
pointed Hopf algebras that are generated as algebras by group-like
and skew-primitive elements. In the quiver terminology, such Hopf
algebras live in Hopf quivers and as algebras are generated by
vertices and arrows. We are going to investigate all the possible
bialgebras of type one living in minimal Hopf quivers. Note that not
all pointed Hopf algebras are bialgebras of type one. Later on we
can see that in general quiver Hopf algebras are not so.

The case of loop quiver is trivial. The quiver Hopf algebra is
generated by the only vertex and arrow, hence is bialgebra of type
one. For the cases of basic $n$-cycles ($n \ge 2$) and the linear
chain, things turn out to be very different. The idea of classifying
bialgebras of type one is similar to those of quiver Hopf algebras.
First we classify the graded ones, and then determine all the
possible deformations.

\subsection{}
We deal with the basic cycle case first. Keep the notations of
Section 3. The graded bialgebras of type one are the graded sub-Hopf
algebras on Hopf quivers generated by vertices and arrows, so the
classification of such algebras can be obtained as a direct
consequence of Lemma 3.2.

\begin{lem} Let $B\mathcal{Z}(q)$ denote the sub-Hopf algebra of $k\mathcal{Z}(q)$
generated by vertices and arrows.
\begin{enumerate}
  \item If $q=1,$ then $B\mathcal{Z}(q) \cong k\mathcal{Z }(1).$
  \item If $\operatorname{ord}(q)=d > 1,$ then $B\mathcal{Z}(q)$ can
        be presented by generators $g, \ a_0$ with relations $g^n=1, \ a_0^d=0, \ ga_0=qa_0g.$
\end{enumerate}
\end{lem}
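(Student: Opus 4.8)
The plan is to extract the structure of $B\mathcal{Z}(q)$ directly from Lemma 3.2 together with the path multiplication formulas (3.1)--(3.2). First I would observe that, by definition, $B\mathcal{Z}(q)$ is the subalgebra of $k\mathcal{Z}(q)$ generated by the vertices $g^i$ (equivalently by $g$, since $g^n=1$) and the arrows $a_i$. Using $p_i^l\cdot g = p_{i+1}^l$ and $a_0^l = l!_q\, p_0^l$ from (3.2), together with $g.p_i^l = q^l p_{i+1}^l$, one sees that every $p_i^l$ lies in $B\mathcal{Z}(q)$ precisely when $l!_q\neq 0$, i.e.\ when $l < d$ (or always, when $q=1$). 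Conversely the subalgebra generated by $g$ and $a_0$ is spanned, via (3.1), by the $p_i^l$ with $l!_q\neq 0$. So the first task is to pin down exactly which paths appear: all of them when $q=1$, and only those of length $<d$ (in each residue class, all shifts $p_i^l$ with $l<d$) when $\operatorname{ord}(q)=d>1$, because $a_0^d = d!_q p_0^d = 0$ already in $k\mathcal{Z}(q)$ by Lemma 3.2(2).

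For part (1), when $q=1$, the relations $g^n=1$ and $ga_0=a_0g$ certainly hold in $B\mathcal{Z}(1)$; since $a_0^l = l!\,p_0^l = l!\,p_0^l \neq 0$ and the $p_i^l$ for all $i,l$ are obtained from $g$ and $a_0$, the subalgebra is all of $k\mathcal{Z}(1)$. Invoking Lemma 3.2(1), which says precisely that $g^n=1$, $ga_0=a_0g$ present $k\mathcal{Z}(1)$, gives $B\mathcal{Z}(q)\cong k\mathcal{Z}(1)$.

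For part (2), with $\operatorname{ord}(q)=d>1$: the relations $g^n=1$, $ga_0=qa_0g$, $a_0^d=0$ all hold in $B\mathcal{Z}(q)$ (the last because $a_0^d = d!_q p_0^d$ and $d!_q=0$). It remains to check these relations suffice, i.e.\ that the abstract algebra $\bar H$ presented by $\langle g,a_0\mid g^n=1,\ ga_0=qa_0g,\ a_0^d=0\rangle$ maps isomorphically onto $B\mathcal{Z}(q)$. By the diamond lemma \cite{b} (ordering e.g.\ $a_0 > g$, with overlaps $g^n a_0$ and $a_0^d g$ resolving compatibly thanks to $q^n = 1$), $\{a_0^j g^i \mid 0\le i\le n-1,\ 0\le j\le d-1\}$ is a $k$-basis of $\bar H$. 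On the coalgebra side, the image of $B\mathcal{Z}(q)$ is spanned by $\{l!_q p_i^l : l<d,\ 0\le i\le n-1\} = \{a_0^j g^i : 0\le i\le n-1,\ 0\le j\le d-1\}$, which has exactly $nd$ elements. Thus the surjection $\bar H \twoheadrightarrow B\mathcal{Z}(q)$ is between spaces of equal finite dimension $nd$, hence an isomorphism; that it is an algebra map is immediate from (3.1).

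The only mildly delicate point is the diamond-lemma bookkeeping in part (2): one must verify that the two ambiguities $a_0^d g$ (reduce to $0\cdot g$ vs.\ $q^{?}g a_0^d$, using $ga_0=qa_0g$ repeatedly) and $g^n a_0$ are resolvable, which works exactly because $d = \operatorname{ord}(q)$ and $q^n=1$. Everything else — the identification of the image of $B\mathcal{Z}(q)$ with the span of the listed monomials, and the comparison of dimensions — is routine and follows the template already used in the proof of Lemma 3.2.
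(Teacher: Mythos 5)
Your proof is correct and follows essentially the same route as the paper, which offers no separate argument for this lemma but derives it as a direct consequence of Lemma 3.2 together with the multiplication formulas (3.1)--(3.2) --- exactly the ingredients you use, with your diamond-lemma and dimension-count verification in part (2) simply making explicit the routine step the paper leaves implicit. (One minor quibble: with the reduction $ga_0\to qa_0g$ the relevant overlap ambiguity is $ga_0^d$ rather than $a_0^dg$, but it resolves either way.)
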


We remark that when $q$ is a non-trivial root of unity, the
bialgebra of type one $B\mathcal{Z}(q)$ is a very interesting Hopf
algebra. In particular, when $\operatorname{ord}(q)=n,$ it is the
well-known Taft algebra \cite{taft}. It also appears as the Borel
subalgebra of Lusztig's small quantum $sl_2$ \cite{lusztig}. For
general $\operatorname{ord}(q)=d,$ the algebra $B\mathcal{Z}(q)$ is
a generalization of the Taft algebra.

It follows directly from Lemmas 3.2 and 5.1 that the only finite
dimensional subcoalgebras of $k\mathcal{Z}$ which admit Hopf algebra
structures are $k\mathcal{Z}[d]$ with $d=\operatorname{ord}(q),$
where $q$ is a non-trivial $n$-th root of unity. Hence $d$ is a
factor of $n.$ This is the Theorem 3.1 of \cite{chyz}, which plays
an important role in classifying the monomial Hopf algebras. The
argument in this paper simplifies the old one.

For the non-graded bialgebras of type one living in the Hopf quiver
$\mathcal{Z},$ it suffices to determine the preferred deformations
of $B\mathcal{Z}(q).$ This actually was done in \cite{chyz}, though
not in terms of deformation. It turns out that these are all the
connected monomial Hopf algebras. For completeness, we include the
result here.

\begin{thm} \rm{(\cite{chyz}, Theorem 3.6)}
All the possible preferred deformations of $B\mathcal{Z}(q)$ can be
presented by generators $g, \ a$ with relations \[ g^n=1, \
a^d=\mu(1-g^d), \ ga=qag, \] where $\mu \in \{ 0, \ 1 \}.$
\end{thm}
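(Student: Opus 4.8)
The plan is to mimic the arguments of Sections~3.4--3.8, but now working inside the \emph{smaller} algebra $B\mathcal{Z}(q)$, which by Lemma~5.1 is generated by $g$ and $a_0$ alone (no $p_0^d$). First I would dispose of the case $q=1$: here $B\mathcal{Z}(1)\cong k\mathcal{Z}(1)$ is just the commutative algebra $k[a_0]\rtimes k\langle g\mid g^n=1\rangle$, and the computation in Subsection~3.4 shows the only preferred deformation of $ga_0g^{-1}=a_0$ is the trivial one, since $g^n=1$ forces $n\lambda(1-g)=0$, hence $\lambda=0$. This gives $\mu=0$ trivially as there is no relation $a_0^d=0$ to deform when $d=1$.

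For $\operatorname{ord}(q)=d>1$ I would proceed in two steps. \textbf{Step 1 (reducing the skew-commutation relation).} Exactly as in Subsection~3.5, a preferred deformation could a priori read $ga_0g^{-1}=qa_0+\alpha(1-g)$; setting $\widetilde{a_0}=a_0-\tfrac{\alpha}{1-q}(1-g)$ and invoking the coalgebra automorphism $F^1_\lambda$ of Lemma~3.1 (whose restriction fixes the lower coalgebra filtration and which by Lemma~2.1 carries generators to generators), we may transport the Hopf structure so that $ga_0=qa_0g$ holds on the nose, together with $a_0g^i=a_i$. \textbf{Step 2 (deforming $a_0^d=0$).} Applying the Gaussian binomial identity $\Delta(a_0)^d=(a_0\otimes 1+g\otimes a_0)^d=\sum_{i=0}^d\binom{d}{i}_q a_0^i g^{d-i}\otimes a_0^{d-i}$, and using that $\binom{d}{i}_q=0$ for $0<i<d$ since $\operatorname{ord}(q)=d$, we get $\Delta(a_0^d)=a_0^d\otimes 1+g^d\otimes a_0^d$, so $a_0^d$ is a $(1,g^d)$-skew-primitive. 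Since in $\mathcal{Z}$ the only such elements (in the $B\mathcal{Z}(q)$, i.e.\ arrow-generated, part) are scalar multiples of $1-g^d$, we obtain $a_0^d=\mu(1-g^d)$ for some $\mu\in k$. The relation $g^n=1$ is stable, and no further relations appear because $B\mathcal{Z}(q)$ has no third generator.

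\textbf{Step 3 (normalizing $\mu$ and sufficiency).} If $\mu\ne 0$, rescale $a:=\zeta a_0$ with $\zeta^d\mu=1$; the induced coalgebra automorphism of $k\mathcal{Z}$ (scaling a path by $\zeta^t$ when $a_0$ occurs $t$ times, as in the map $F$ of Subsection~4.4) transports the structure to one with $\mu=1$. This proves every preferred deformation is isomorphic to one with $\mu\in\{0,1\}$. Conversely, for each such $\mu$ let $\mathcal{C}(q,\mu)$ be the abstract algebra on generators $h,a$ with $h^n=1$, $a^d=\mu(1-h^d)$, $ha=qah$; Bergman's Diamond Lemma \cite{b} gives the basis $\{a^jh^i\mid 0\le i\le n-1,\ 0\le j\le d-1\}$, matching the basis $\{a_0^jg^i\}$ of the arrow-generated subspace of $k\mathcal{Z}$, and sending $a^jh^i\mapsto a_0^jg^i$ is an algebra isomorphism by a direct check with (3.1)--(3.2), and manifestly a coalgebra map, hence these relations genuinely define a Hopf structure on the sub-path-coalgebra.

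The main obstacle I anticipate is Step~2: one must be careful that the space of $(1,g^d)$-skew-primitives \emph{available inside a deformation of $B\mathcal{Z}(q)$} is exactly $k(1-g^d)$ --- in the full path coalgebra $k\mathcal{Z}$ the length-$d$ part $k\mathcal{Z}_d$ would also contribute (this is precisely where $p_0^d$ entered in Theorem~3.3), but since $B\mathcal{Z}(q)$ omits $p_0^d$, the only way the extra term could reappear after deformation is if it were forced by $a_0^d$, and the coradical filtration argument rules this out because $a_0^d$ already lies in the degree-$d$ graded piece with no room to absorb a $p_0^d$-component while remaining in the subalgebra generated by $g,a_0$. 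Verifying this cleanly, rather than by the brute-force comultiplication bookkeeping of Subsection~3.7, is the delicate point; everything else is routine.
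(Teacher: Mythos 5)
Your proposal is essentially sound, but note first that the paper itself gives no proof of this statement: it is quoted from \cite{chyz} (Theorem 3.6), with the remark that the classification there was obtained ``not in terms of deformation.'' What you have written is therefore a self-contained deformation-theoretic proof in the style of Subsections 3.4--3.7, which is exactly the route the authors gesture at but do not carry out; Steps 1--3 correctly transplant the arguments of (3.5)--(3.7) to the subalgebra generated by $g$ and $a_0$, and the normalization of $\mu$ via rescaling $a_0$ matches the isomorphism criteria recorded in Theorem 3.5. The one genuine gain of your argument over Theorem 3.3 is worth stating explicitly: in $B\mathcal{Z}(q)$ there is no generator $p_0^d$, hence no commutator relation $[a_0,p_0^d]$ whose compatibility forced $\mu=0$ when $n\ne 2d$ in Subsection 3.6; this is the precise reason $\mu$ survives for \emph{every} divisor $d>1$ of $n$ here.

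Your ``anticipated obstacle'' in the final paragraph, however, is a red herring and slightly misdiagnoses the situation. The length-$d$ component $k\mathcal{Z}_d$ never contributes to the space of $(1,g^d)$-skew-primitives: in a coradically graded pointed coalgebra skew-primitives live in degrees $0$ and $1$ only, and $\Delta(p_0^d)$ has nonzero middle terms, so $p_0^d$ is not skew-primitive. The paper uses exactly this in Subsection 3.6 (equation (3.7)): since $\mathcal{Z}$ has no arrow from $1$ to $g^d$ (as $d\not\equiv 1 \bmod n$ for $1<d\le n$), the $(1,g^d)$-skew-primitives of the \emph{full} path coalgebra are already just $k(1-g^d)$, with no restriction to the subcoalgebra needed. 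Where $p_0^d$ entered Theorem 3.3 was as an additional algebra generator required to generate all of $k\mathcal{Z}$, not as a competing skew-primitive. So Step 2 needs no delicate filtration argument --- the ``brute-force'' observation that no arrow joins $1$ to $g^d$ suffices --- and with that correction your proof is complete.
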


\subsection{}
Now we consider the case of linear chain. Keep the notations of
Section 4. First by Lemma 4.2, we can classify the graded bialgebras
of type one.

\begin{lem}
Let $B\mathcal{A}(q)$ denote the sub-Hopf algebra of
$k\mathcal{A}(q)$ generated by vertices and arrows.
\begin{enumerate}
  \item If $q$ is not a non-trivial root of unity, then $B\mathcal{A}(q) \cong k\mathcal{A }(q).$
  \item If $q$ is a root of unity with order $\operatorname{ord}(q)=d > 1,$ then $B\mathcal{A}(q)$ can
        be presented by generators $g, \ g^{-1}, \ e_0$ with relations $gg^{-1}=1=g^{-1}g, \ e_0^d=0, \ ge_0=qe_0g.$
\end{enumerate}
\end{lem}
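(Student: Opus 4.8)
The plan is to mimic the proof of Lemma 3.2 (and the earlier Lemma 4.2), since $B\mathcal{A}(q)$ is the sub-Hopf algebra of $k\mathcal{A}(q)$ generated by the vertices $g^i$ (equivalently $g, g^{-1}$) and the arrows $e_i$. First I would observe that, using the path multiplication formula in $k\mathcal{A}(q)$ (the linear-chain analogue of (3.1)--(3.2)), one has $e_0 g^i = e_i$, $g e_0 = q e_0 g$, and $e_0^l g^i = l!_q\, p_i^l$; so every path $p_i^l$ lies in the subalgebra generated by $g, g^{-1}, e_0$, whence that subalgebra is all of $B\mathcal{A}(q)$. This already shows the listed elements generate, and it exhibits the listed relations as genuine relations: $gg^{-1}=1=g^{-1}g$ is clear, $g e_0 = q e_0 g$ comes from the $kG$-module structure $g.e_0 = q e_0$, and when $\operatorname{ord}(q)=d$ the identity $e_0^d = d!_q\, p_0^d$ together with $d!_q = 0$ gives $e_0^d = 0$.

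Next I would prove that these relations suffice, i.e. that the abstract algebra $B(q)$ presented by those generators and relations surjects onto $B\mathcal{A}(q)$ via an isomorphism. In case (1), when $q$ is not a nontrivial root of unity, $B\mathcal{A}(q)$ is all of $k\mathcal{A}(q)$ because then no $l!_q$ vanishes, so every $p_i^l$ is a scalar multiple of $e_0^l g^i$; combined with Lemma 4.2 this identifies $B\mathcal{A}(q) \cong k\mathcal{A}(q)$, giving (1). In case (2), with $\operatorname{ord}(q)=d>1$, I would apply Bergman's Diamond Lemma to the presentation $\langle g, g^{-1}, e_0 \mid gg^{-1}=1=g^{-1}g,\ e_0^d=0,\ ge_0=qe_0g\rangle$: ordering monomials so that the rewriting rules $g^{-1}g \to 1$, $gg^{-1}\to 1$, $e_0^d \to 0$, $e_0 g \to q^{-1} g e_0$ reduce words to the normal form $\{\, e_0^j g^i \mid i \in \mathbb{Z},\ 0 \le j \le d-1 \,\}$, one checks the (finitely many) overlap ambiguities resolve. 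On the other side, by the path multiplication formula, $B\mathcal{A}(q)$ has $k$-basis $\{\, p_i^l \mid i \in \mathbb{Z},\ l \ge 0 \,\}$ with $p_i^l$ matching $e_0^j g^i$ when $l = j + d\cdot 0$, but since $d!_q=0$ the paths $p_i^l$ with $l \ge d$ are \emph{not} in the subalgebra generated by $e_0, g, g^{-1}$ --- so in fact the image is spanned exactly by $\{\, j!_q\, p_i^j \mid 0 \le j \le d-1,\ i \in \mathbb{Z}\,\}$. Hence the natural map $B(q) \to B\mathcal{A}(q)$, $e_0^j g^i \mapsto j!_q\, p_i^j$, is a bijection between bases, and one verifies multiplicativity using the linear-chain version of (3.1).

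The main obstacle, such as it is, is bookkeeping rather than conceptual: carefully pinning down the linear-chain multiplication formula analogous to (3.1) (noting $g$ is now invertible and indices run over $\mathbb{Z}$ rather than $\mathbb{Z}/n$), and verifying the Diamond Lemma ambiguity conditions for the presentation in case (2). Both are entirely parallel to the basic-cycle arguments in Lemma 3.2, so I would state the presentation, note that the verification is routine by the Diamond Lemma and the path multiplication formula exactly as in the proof of Lemma 3.2, and omit the detailed computation.
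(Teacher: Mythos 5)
Your proposal is correct and follows essentially the same route the paper intends: Lemma 5.3 is presented there as a direct consequence of the presentation in Lemma 4.2, whose proof is in turn the routine Diamond Lemma plus path-multiplication argument of Lemma 3.2 that you reproduce for the linear chain. The one slightly garbled sentence (attributing the basis $\{p_i^l \mid l\ge 0\}$ to $B\mathcal{A}(q)$ rather than to $k\mathcal{A}(q)$) is immediately corrected by your observation that the subalgebra is spanned by $\{p_i^j \mid 0\le j\le d-1\}$, which is closed under multiplication since $\binom{j+j'}{j}_q=0$ for $j+j'\ge d$.
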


Next we consider the possible deformation of $B\mathcal{A}(q).$ For
the case of $q$ being not a non-trivial root of unity, it is done in
Theorem 4.3. When $q$ is a root of unity with order
$\operatorname{ord}(q)=d > 1,$ it suffices to deform the relations
$e_0^d=0, \ ge_0=qe_0g.$ By the same argument as those in
Subsections 3.5 and 4.6, we can always preserve the relation
$ge_0=qe_0g,$ while deform $e_0^d=0$ to $e_0^d=\lambda(1-g^d).$ In
this situation, as coalgebra $B\mathcal{A}(q)$ is identical to the
subcoalgebra $k\mathcal{A}[d-1]$ of the path coalgebra
$k\mathcal{A},$ namely the subcoalgebra spanned by paths of length
strictly less than $d.$ We record the results as follows.

\begin{thm}
If $H$ is a bialgebra of type one with $Q(H)=\mathcal{A},$ then $H$
is one of the following:
\begin{enumerate}
  \item $k\mathcal{A}(q),$ where $q$ is not a non-trivial root of
        unity.
  \item H can be presented by generators $g, \ g^{-1}, \ e$ with
        relations  $gg^{-1}=1=g^{-1}g, \ ge=qeg, \
        e^d=\mu(1-g^d),$ where $q$ is a root of unity of order
        $d > 1$ and $\mu \in \ \{0,1\}.$
\end{enumerate}
\end{thm}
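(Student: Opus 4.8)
The plan is to follow exactly the same strategy used for the basic cycle (Section 3) and the earlier linear chain results (Section 4), passing from the graded bialgebra $B\mathcal{A}(q)$ to its preferred deformations. First I would recall from Lemma 4.4 that a graded bialgebra of type one with $Q(H)=\mathcal{A}$ is either $k\mathcal{A}(q)$ with $q$ not a non-trivial root of unity (case (1), which is already the answer there), or, when $q$ is a root of unity of order $d>1$, the algebra on generators $g,g^{-1},e_0$ with relations $gg^{-1}=1=g^{-1}g$, $ge_0=qe_0g$, $e_0^d=0$. By Lemma 2.1 the same generators $g,g^{-1},e_0$ (lifted representatives) generate any non-graded $H$ with $\gr H$ of this form, so it suffices to determine all preferred deformations of the two defining relations $ge_0=qe_0g$ and $e_0^d=0$.

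Next I would handle the relation $ge_0=qe_0g$. Applying $\D$ to $ge_0g^{-1}$ gives $\D(ge_0g^{-1})=(ge_0g^{-1})\otimes 1+g\otimes(ge_0g^{-1})$, so $ge_0g^{-1}=qe_0+\lambda(1-g)$ for some $\lambda\in k$, exactly as in Subsection 4.5; then $q\ne 1$ lets me absorb the correction by the substitution $\widetilde{e_0}=e_0+\frac{\lambda}{1-q}$, and Lemma 4.1 guarantees this base change extends to a coalgebra automorphism of $k\mathcal{A}$ under which one may assume $ge_0=qe_0g$ outright. I would also note (as in (3.4)/(4.7)) that after this normalization we may assume $e_0^j g^i=j!_q\,p_i^j$ for $0\le j\le d-1$, so that the monomials behave as in the graded case in low degrees. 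Then I would treat $e_0^d$: by the quantum binomial formula $\D(e_0^d)=(e_0\otimes 1+g\otimes e_0)^d=\sum_{i}\binom{d}{i}_q e_0^i g^{d-i}\otimes e_0^{d-i}=e_0^d\otimes 1+g^d\otimes e_0^d$, so $e_0^d$ is $(1,g^d)$-primitive; since in $\mathcal{A}$ there is no path from $1$ to $g^d$ of length $d$ that is not already a multiple of $p_0^d$, and since a bialgebra of type one has no $p_0^d$ generator available, the only $(1,g^d)$-primitives are multiples of $1-g^d$, giving $e_0^d=\mu(1-g^d)$ for some $\mu\in k$.

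Then I would carry out the rescaling that reduces $\mu$ to $\{0,1\}$: if $\mu\ne 0$, replace $e_0$ by a scalar multiple $\zeta e_0$ so that $\mu\zeta^{-d}$ becomes $1$ — but here one must check that such a rescaling of $e_0$ (together with the induced rescaling of the higher paths $p_i^l$, as in the coalgebra automorphism $F$ displayed in Subsection 4.4) is a coalgebra automorphism of $k\mathcal{A}$ and fixes $g$, which it is. This yields the stated list: $k\mathcal{A}(q)$ when $q$ is not a non-trivial root of unity, and the algebra on $g,g^{-1},e$ with $gg^{-1}=1=g^{-1}g$, $ge=qeg$, $e^d=\mu(1-g^d)$, $\mu\in\{0,1\}$, otherwise. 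Finally, for each of these candidate algebras one must verify the relations are actually sufficient, i.e. that they do define a Hopf structure on the appropriate subcoalgebra of $k\mathcal{A}$; as remarked in the excerpt, in the deformed case $e^d=\mu(1-g^d)$ the underlying coalgebra is $k\mathcal{A}[d-1]$, and one uses Bergman's Diamond Lemma to see the PBW-type basis $\{e^j g^i\mid 0\le j\le d-1,\ i\in\mathbb{Z}\}$ matches the path basis, then checks multiplicativity of the induced linear isomorphism by direct computation with the formulas of Lemma 4.2, exactly as in Subsection 3.7. The main obstacle — though it is more bookkeeping than conceptual — is keeping track of which $(1,g^d)$-primitive elements are available once the $p_0^d$ generator has been removed (that is what forces $\mu$ rather than an arbitrary primitive), and confirming that the rescaling automorphism genuinely extends over all of $k\mathcal{A}$; both are handled by Lemma 4.1 and the explicit automorphism in Subsection 4.4.
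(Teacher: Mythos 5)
Your proposal is correct and follows essentially the same route as the paper: the graded classification (Lemma 5.3), then the preferred-deformation argument of Subsections 3.5 and 4.6 to normalize $ge_0=qe_0g$ and to deform $e_0^d=0$ to $e_0^d=\mu(1-g^d)$ (the underlying coalgebra being $k\mathcal{A}[d-1]$, which has no $(1,g^d)$-primitives besides multiples of $1-g^d$), followed by a rescaling of $e_0$ and a Diamond Lemma verification of sufficiency. The only slip is the normalizing substitution, which should read $\widetilde{e_0}=e_0-\frac{\lambda}{1-q}(1-g)$ rather than $e_0+\frac{\lambda}{1-q}$ --- a typo the paper itself makes in Subsection 4.5 but gets right in Subsection 3.5.
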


We remark that when $q$ is not a root of unity, the Hopf algebra
$k\mathcal{A}(q)$ is the well-known Borel subalgebra of the quantum
group $U_\nu(sl_2),$ here $\nu=\sqrt{q};$ while $q$ is a non-trivial
root of unity, the Hopf algebra in (2), denoted by $B\mathcal{A}(q,
\mu),$ is closely related to the (Borel subalgebra of) De
Concini-Kac quantum group $U_\nu(sl_2)$ at roots of unity
\cite{dck}.

\subsection{}
By comparing the previous classification of bialgebras of type one
with the classification of quiver Hopf algebras, it is clear that
there is no hope to extend the Gabriel type theorem of Van Oystaeyen
and Zhang to non-graded pointed Hopf algebras. For example, when $q$
is a non-trivial root of unity, the Hopf algebras $B\mathcal{Z}(q)$
and $B\mathcal{A}(q)$ do have non-trivial deformations, while the
quiver Hopf algebras $k\mathcal{Z}(q)$ and $k\mathcal{A}(q)$ do not
in general. In other words, these Hopf algebras living in a proper
subcoalgebra of $k\mathcal{Z}$ and $k\mathcal{A}$ can not be
extended to the whole path coalgebras, hence they are not sub-Hopf
algebras of any Hopf algebra structures on the path coalgebras of
the corresponding Hopf quivers.

\subsection{}
Now we apply the obtained results to simple-pointed Hopf algebras. A
Hopf algebra $H$ is said to be simple-pointed, if it is pointed, not
cocommutative, and if $L$ is a proper sub-Hopf algebra of $H,$ then
$L \subseteq kG(H).$ Very naturally the ``simpleness" of such Hopf
algebras can be visualized by their corresponding Hopf quivers. We
remark that the definition of simple-pointed Hopf algebras adopted
here is from \cite{z} which includes infinite-dimensional situation.
In there the complete list of classification was obtained by
different methods from ours.

\begin{thm}
A Hopf algebra $H$ is simple-pointed if and only if its graded
version $\gr H$ is simple-pointed, if and only if it is a bialgebra
of type one living in either the Hopf quiver $\mathcal{Z}$ or
$\mathcal{A}.$ Hence it is one of the following: $k\mathcal{Z}(1),$
$B\mathcal{Z}(q,\mu)$ \rm{($q$ is a root of unity of order $> 1,$
$\mu \in \{0,1\}$),} $k\mathcal{A}(q)$ \rm{($q$ is not a non-trivial
root of unity),} $k\mathcal{A}(1,1),$ $B\mathcal{A}(q, \mu)$
\rm{($q$ is a root of unity of order $> 1,$ $\mu \in \{0,1\}$).}
\end{thm}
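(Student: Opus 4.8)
The plan is to prove the three equivalent characterizations in a cycle. First I would establish that $H$ is simple-pointed if and only if $\gr H$ is. The nontrivial direction here is that simple-pointedness of $\gr H$ implies that of $H$: if $L \subseteq H$ is a proper sub-Hopf algebra, then the coradical filtration of $H$ restricts to one on $L$, and passing to associated graded yields a sub-Hopf algebra $\gr L \subseteq \gr H$; if this were not contained in $k G(\gr H) = k G(H)$, it would contradict simple-pointedness of $\gr H$, so $\gr L = \gr(k G(H) \cap L)$, which by the faithfulness of degeneration on the level of coefficients forces $L \subseteq k G(H)$. Conversely, if $H$ is simple-pointed, one checks $H$ is not graded-cocommutative and that a proper graded sub-Hopf algebra of $\gr H$ lifts (via Lemma 2.1, or rather its sub-Hopf-algebra refinement) to a proper sub-Hopf algebra of $H$, which must then sit inside $k G(H)$.

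Next I would show that a simple-pointed Hopf algebra is forced to be a bialgebra of type one living in $\mathcal{Z}$ or $\mathcal{A}$. Since $H$ is pointed, $G := G(H)$ is a group; the non-cocommutativity rules out the case where $H = kG$. The sub-Hopf algebra generated by $kG$ together with all skew-primitive elements must, by simple-pointedness, be all of $H$ unless it lies in $kG$ — but if it lay in $kG$ then $H_1 = H_0$, forcing $H = H_0 = kG$, again cocommutative. Hence $H$ is generated by group-likes and skew-primitives, i.e. a bialgebra of type one. Now $G$ itself: any subgroup gives a sub-Hopf algebra $kG' \subseteq kG(H)$, which is fine, but the arrows of $Q(H)$ must connect everything — if $Q(H)$ is disconnected, the component of $e$ spans a proper sub-Hopf algebra not inside $kG$, a contradiction; so $Q(H)$ is connected, and by the Cartier–Gabriel/ramification description it is $Q(G,R)$ with $R$ supported on the class generating $G$. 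If two distinct nontrivial conjugacy classes had nonzero ramification, or if any $R_C \geq 2$, one again extracts a proper sub-Hopf algebra (corresponding to a sub-quiver) not contained in $kG$. This pins $G$ down to being cyclic (finite or infinite) with $R$ the single generating arrow, i.e. $Q(H) = \mathcal{Z}$ or $\mathcal{A}$.

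Finally I would read off the explicit list. Having reduced to bialgebras of type one on $\mathcal{Z}$ or $\mathcal{A}$, I invoke Theorem 5.3 and Theorem 5.5 for the complete classification of those, then discard the ones that fail simple-pointedness. For $\mathcal{Z}$ with $q = 1$ we get $k\mathcal{Z}(1)$ (which is cocommutative? — no: it is commutative, hence as a Hopf algebra one must double-check, but it is the group-like-and-primitive algebra of the cyclic group, and it is indeed simple-pointed since its only sub-Hopf algebras are built from the group and the primitive, forcing the stated containment once non-cocommutativity is verified, which holds because the comultiplication on $a_0$ is genuinely non-symmetric relative to a nontrivial group element); for $q$ a root of unity of order $> 1$ we get $B\mathcal{Z}(q,\mu)$; the full quiver algebras $k\mathcal{Z}(q)$ with $1 < \operatorname{ord}(q)$ are excluded because $B\mathcal{Z}(q) \subsetneq k\mathcal{Z}(q)$ is a proper sub-Hopf algebra not inside $kG$. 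Similarly on $\mathcal{A}$ one keeps $k\mathcal{A}(q)$ for $q$ not a nontrivial root of unity, $k\mathcal{A}(1,1)$, and $B\mathcal{A}(q,\mu)$, while excluding $k\mathcal{A}(q)$ with $q$ a nontrivial root of unity (again properly containing $B\mathcal{A}(q)$) and $k\mathcal{A}(1,0) = k\mathcal{A}(1)$ which is cocommutative.

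The main obstacle I anticipate is the second step: rigorously extracting, from an arbitrary disconnected or high-ramification Hopf quiver, a concrete proper sub-Hopf algebra that escapes $kG(H)$ — this needs the precise correspondence between sub-Hopf quivers and sub-Hopf bimodules from \cite{cr2}, together with care that restricting the quantum shuffle product to a sub-quiver does produce a Hopf subalgebra rather than merely a subcoalgebra. The first step's passage to associated graded is standard but also requires the observation that $G(\gr H) = G(H)$ and that degeneration does not collapse a proper sub-Hopf algebra into $kG$ unless it already was inside $kG$; once these two points are in hand the rest is bookkeeping against Theorems 5.3 and 5.5.
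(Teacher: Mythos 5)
Your overall route is essentially the paper's: deduce from simple-pointedness that $Q(H)$ is connected with a single arrow leaving the unit element, hence that $G(H)$ is cyclic and $Q(H)$ is $\mathcal{Z}$ or $\mathcal{A}$, note that $H$ must be a bialgebra of type one, and then read the list off the classifications of Section 5 (your references to ``Theorem 5.3 and Theorem 5.5'' should be Lemma 5.1, Theorem 5.2, Lemma 5.3 and Theorem 5.4). You actually supply more detail than the paper's half-page proof on the reduction to minimal quivers and on the equivalence between simple-pointedness of $H$ and of $\gr H$; the key observation there, that a proper sub-Hopf algebra $L\not\subseteq kG(H)$ induces $\gr L\not\subseteq kG(H)$ inside $\gr H$, is the right one, even if the phrase ``faithfulness of degeneration'' would need to be unpacked.

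There is, however, a concrete error in your final bookkeeping. You propose to exclude $k\mathcal{A}(1,0)=k\mathcal{A}(1)$ on the grounds that it is cocommutative. It is not: $\Delta(e_0)=e_0\otimes 1+g\otimes e_0$ with $g\neq 1$ in the infinite cyclic group, so $\Delta\neq\tau\circ\Delta$ already in degree one ($k\mathcal{A}(1)$ is commutative, not cocommutative). Moreover $k\mathcal{A}(1)$ is simple-pointed --- any sub-Hopf algebra not inside $kG$ contains some arrow $e_i$, hence the group-likes $g^{\pm 1}$, hence $e_0=e_i g^{-i}$, hence all of $k\mathcal{A}(1)$ --- and it does appear in the theorem's list as the case $q=1$ of ``$k\mathcal{A}(q)$ with $q$ not a non-trivial root of unity,'' a family you simultaneously say you keep; so your exclusion is both based on a false premise and inconsistent with your own list. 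The correct exclusions on $\mathcal{A}$ are only the Hopf structures on the full path coalgebra for $q$ a non-trivial root of unity of order $d$, namely the graded $k\mathcal{A}(q)$ and its deformations $k\mathcal{A}(d,q,\lambda)$, each of which properly contains $B\mathcal{A}(q)$ as a sub-Hopf algebra not inside $kG$ (and likewise $k\mathcal{Z}(q)$, $k\mathcal{Z}(n,q,\lambda)$, $k\mathcal{Z}(\frac{n}{2},q,\mu)$ on the cycle side for $\operatorname{ord}(q)>1$). With that correction, and a sentence verifying that each survivor really is simple-pointed by the same ``one arrow forces all generators'' argument, your proof agrees with the paper's.
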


\begin{proof}
Assume that $H$ is simple-pointed, then the Hopf quiver $Q(H)$ must
be connected. It is not hard to deduce from the definition of
simple-pointed Hopf algebras that there is exactly one arrow going
from the unit of the group $G(H)$ to some non-unit element. So by
the definition of Hopf quiver it follows at once that $G(H)$ must be
a cyclic group and $Q(H)$ must be either $\mathcal{Z}$ or
$\mathcal{A}.$ Now the theorem follows directly from Lemma 5.1,
Theorem 5.2, Lemma 5.3, and Theorem 5.4.
\end{proof}

\vskip 10pt

\noindent {\bf Acknowledgements:} The authors were supported
partially by the NSF of China (Grant No. 10501041, 10526037,
10601052). Part of the work was done while the first author was
visiting the Abdus Salam International Centre for Theoretical
Physics (ICTP). He expresses his sincere gratitude to the ICTP for
its support.


\begin{thebibliography}{999}

\bibitem{as1}
N. Andruskiewitsch, H.-J. Schneider, Lifting of quantum linear
spaces and pointed Hopf algebras of order $p^3,$  J. Algebra 209
(1998) 658-691.

\bibitem{as2}
N. Andruskiewitsch, H.-J. Schneider, On the classification of
finite-dimensional pointed Hopf algebras, Ann. Math., to appear.
math.QA/0502157.

\bibitem{ass}
I. Assem, D. Simson, A. Skowronski, Elements of the Representation
Theory of Associative Algebras. Vol. 1. Techniques of Representation
Theory. London Mathematical Society Student Texts, 65. Cambridge
University Press, Cambridge, 2006.

\bibitem{b}
G. Bergman, The diamond lemma for ring theory, Adv. Math. 29 (1978)
178-218.

\bibitem{chyz}
X.-W. Chen, H.-L. Huang, Y. Ye, P. Zhang, Monomial Hopf algebras, J.
Algebra 275 (2004) 212-232.

\bibitem{cm}
W. Chin, S. Montgomery, Basic coalgebras, Modular interfaces
(Riverside, CA, 1995), 41-47, AMS/IP Stud. Adv. Math. 4, Amer. Math.
Soc., Providence, RI, 1997.

\bibitem{c1}
C. Cibils, A quiver quantum group, Comm. Math. Phys. 157 (1993)
459-477.

\bibitem{cr1}
C. Cibils, M. Rosso, Alg\`{e}bres des chemins quantiques, Adv. Math.
125 (1997) 171-199.

\bibitem{cr2}
C. Cibils, M. Rosso, Hopf quivers, J. Algebra 254 (2002) 241-251.

\bibitem{dck}
C. De Concini, V.G. Kac, Representations of quantum groups at roots
of 1, in ``Operator Algebras, Unitary Representations, Enveloping
Algebras, and Invariant Theory", ed. A. Connes etc (2000),
Birkh\"auser, 471-506.

\bibitem{gab}
P. Gabriel, Unzerlegbare Darstellungen I, Manuscripta Math. 6 (1972)
71-103.

\bibitem{grk}
P. Gabriel, A.V. Roiter, Representations of finite-dimensional
algebras. Translated from the Russian. With a chapter by B. Keller.
Reprint of the 1992 English translation. Springer-Verlag, Berlin,
1997.

\bibitem{g-s}
M. Gerstenhaber, S.D. Schack, Bialgebra cohomology, deformations,
and quantum groups, Proc. Nat. Acad. Sci. 87 (1990) 478-481.

\bibitem{g}
E.L. Green, Constructing quantum groups and Hopf algebras from
coverings, J. Algebra 176 (1995) 12-33.

\bibitem{gs}
E.L. Green, \O. Solberg, Basic Hopf algebras and quantum groups,
Math. Z. 229 (1998) 45-76.

\bibitem{kassel}
C. Kassel, Quantum groups, Graduate Texts in Mathematics, 155.
Springer-Verlag, New York, 1995.

\bibitem{lusztig}
G. Lusztig, Finite dimensional Hopf algebras arising from quantized
universal enveloping algebras, Journal of the AMS 3 (1990) 257-296.

\bibitem{mont1}
S. Montgomery, Hopf Algebras and Their Actions on Rings, CBMS
Regional Conf. Series in Math. 82, Amer. Math. Soc., Providence, RI,
1993.

\bibitem{mont2}
S. Montgomery, Indecomposable coalgebras, simple comodules and
pointed Hopf algebras, Proc. of the Amer. Math. Soc. 123 (1995)
2343-2351.

\bibitem{n}
W.D. Nichols, Bialgebras of type one, Communications in Algebra
6(15) (1978) 1521-1552.

\bibitem{rad}
D. Radford, Finite-dimensional simple-pointed Hopf algebras, J.
Algebra 211 (1999) 686-710.

\bibitem{rosso}
M. Rosso, Quantum groups and quantum shuffles, Invent. Math. 133
(1998) 399-416.

\bibitem{sw}
M. Sweedler, Hopf algebras, W. A. Benjamin, Inc., New York, 1969.

\bibitem{taft}E.J. Taft, The order of the antipode of finite
dimensional Hopf algebras, Prc. Nat. Acad. Sci. USA 68 (1971)
2631-2633.

\bibitem{voz}
F. Van Oystaeyen, P. Zhang, Quiver Hopf algebras, J. Algebra 280(2)
(2004) 577-589.

\bibitem{z}
P. Zhang, Hopf algebras on Schurian quivers, Comm. Algebra 34(11)
(2006) 4065-4082.

\end{thebibliography}
\end{document}